\newcommand{\N}{{\mathbb N}}
\newcommand{\R}{{\mathbb R}}
\newcommand{\rn}{{\mathbb{R}^N}}
\newcommand{\la}{\lambda}
\numberwithin{equation}{section}
\newtheorem{theorem}{Theorem}[section]
\newtheorem{proposition}[theorem]{Proposition}
\newtheorem{lemma}[theorem]{Lemma}
\newtheorem{definition}[theorem]{Definition}
\newtheorem{remark}[theorem]{Remark}
\theoremstyle{definition}
\renewcommand{\dfrac}{\displaystyle\frac}
\newcommand{\brm}{\begin{remark}\rm}
\newcommand{\erm}{\end{remark}}
\newcommand{\brms}{\begin{remark}\rm}
\newcommand{\erms}{\end{remark}}
\newcommand{\bte}{\begin{theorem}}
\newcommand{\ete}{\end{theorem}}
\newcommand{\bpr}{\begin{proposition}}
\newcommand{\epr}{\end{proposition}}
\newcommand{\ble}{\begin{lemma}}
\newcommand{\ele}{\end{lemma}}
\newcommand{\beq}{\begin{equation}}
\newcommand{\eeq}{\end{equation}}
\newcommand{\bdm}{\begin{displaymath}}
\newcommand{\edm}{\end{displaymath}}
\numberwithin{equation}{section}
\newcommand{\bos}{\begin{remark}\rm}
\newcommand{\eos}{\end{remark}}
\newcommand{\ben}{\begin{enumerate}}
\newcommand{\een}{\end{enumerate}}
\newcommand{\e }{\varepsilon }
\newcommand{\be}{\begin{equation}}
\newcommand{\ee}{\end{equation}}
\title[Radial symmetry and applications ]{Radial symmetry and applications  for a problem involving the $-\Delta_p(\cdot)$ operator and critical nonlinearity in~$\mathbb{R}^N$}
\author[L. Damascelli]{Lucio Damascelli$^+$}
\author[S.\ Merch\'an]{Susana Merch\'an$^*$}
\author[L.\ Montoro]{Luigi Montoro$^*$}
\author[B.\ Sciunzi]{Berardino Sciunzi$^*$}
\thanks{\it 2010 Mathematics Subject
 Classification: 35J92,35B33,35B06}
\thanks{$^+$Dipartimento di Matematica, Universit\`{a} di Roma ``Tor Vergata",
V. della Ricerca Scientifica 1  Roma, Italy.
E-mail: {\em damascel@mat.uniroma2.it}}
\thanks{$^*$Dipartimento di Matematica e Informatica,
Universit\`a della Calabria,
Ponte Pietro Bucci 31B, I-87036 Arcavacata di Rende, Cosenza, Italy,
E-mail: {\em merchan@mat.unical.it}, {\em montoro@mat.unical.it}, {\em sciunzi@mat.unical.it}}
\thanks{BS  were partially supported by ERC-2011-grant: \emph{Elliptic PDE's and symmetry of interfaces and layers for odd nonlinearities.}}
\thanks{LM and BS  were partially supported by PRIN-2011: {\em Variational and Topological Methods in the Study of Nonlinear Phenomena}}
\begin{document}

\begin{abstract}
We consider weak non-negative solutions to the critical $p$-Laplace equation in $\mathbb{R}^N$
\begin{equation}\nonumber
-\Delta_p u =u^{p^*-1}\,,
\end{equation}
 in the singular case $1<p<2$. We prove that if
 $p^*\geqslant2$  then
 all the solutions  in ${\mathcal D}^{1,p}(\R^N)$ are radial (and radially decreasing) about some point.
\end{abstract}

\maketitle

\tableofcontents

\medskip
\section{Introduction}\label{introdue}
We consider weak non-negative  solutions to the critical $p$-Laplace equation in $\mathbb{R}^N$
\begin{equation}\label{eq:p}
u \geq 0 \, , \quad u\in \mathcal {D}^{1,p}(\R^N) \, , \quad-\Delta_p u =u^{p^*-1}\quad \text{in }\,\,\mathbb{R}^N
\end{equation}
where $1< p<2\leq N$,  $p^*$ is the critical exponent for the Sobolev immersion and we assume that
the nonlinearity in \eqref{eq:p} is locally Lipschitz continuous in $[0, + \infty )$, namely $p^*\geqslant2$. \\
We assume that
$u\in \mathcal {D}^{1,p}(\R^N)$ which is a space naturally associated to
the critical equation \eqref{eq:p}.
 Let us recall that
\[
\mathcal {D}^{1,p}(\R^N) = \Big\{u\in L^{p^*}(\mathbb{R}^N) \,:\, \int_{\mathbb{R}^N}|\nabla u|^p <\infty\Big\}\,,
\]
is the completion of $ C^{\infty}_{c}(\mathbb{R}^N)$ with respect to the norm $\|\varphi\|_{{D}^{1,p}(\mathbb{R}^N)}\,:=\,(\int_{\mathbb{R}^N}|\nabla \varphi|^p)^{1/p}$. \par

\begin{remark}\label{dhfgflxvvjdf}
Any solution $u\in \mathcal {D}^{1,p}(\R^N)$ of  \eqref{eq:p} belongs to  $ L^\infty(\mathbb{R}^N)$. \par
This follows by exploiting the technique of \cite{trudi} (that goes back to \cite{moser}) and then the $L^\infty$ estimates in \cite{localser}.
 The adaptation of this scheme to the quasilinear  case is straightforward (following  \cite{trudi} we obtain that
 $\|u\|_{L^{\beta p^*}(B(x,r))}\leqslant C(r)$, with $C(r)$  not depending on $x$, for some $\beta>1$, in fact we can take any $\beta$ such that $1\leqslant \beta <p^*/p$ ; \  the $L^\infty$ estimate then follows by \cite[Theorem 1]{localser}), see e.g.  \cite{peral} for the details.
 \end{remark}

Taking into account Remark \ref{dhfgflxvvjdf}, by standard $C^{1,\alpha}$ estimates (see \cite{Di,mingione,Li,Te,T}), we deduce that $u$ is locally of class  $C^{1,\alpha}$.
 This fact allows the use of the strong maximum principle (see \cite{V}) to deduce that any nonnegative nontrivial  solution to \eqref{eq:p} is actually strictly positive. \par
 So throughout the paper we can and will assume that a nonnegative nontrivial solution $u$ to \eqref{eq:p}  satisfies

 \[ u >0 \text{ in } \R^N \, , \quad
u\in \mathcal {D}^{1,p}(\R^N) \cap L^\infty(\mathbb{R}^N)  \cap C^{1,\alpha}_{loc}(\mathbb{R}^N)\,.
\]

\medskip

It is a key and open problem  to understand if, under our assumptions,  all the solutions to \eqref{eq:p} are radial and radially decreasing.\par

\medskip
  In the semilinear case $p=2$, with $N\geq 3$, the situation is well understood and
it is well known that classical positive solutions to \eqref{eq:p} are radial and radially decreasing. \\
This was first proved in \cite{GNN2} via the \emph{Moving Plane Method} (see \cite{S} and  \cite{GNN}) under suitable decay assumptions on the solution.
 Later, in \cite{CGS}, \cite{chen}, the same result has been obtained, without any a-priori decay assumption, using the Kelvin transform. Consequently, the solutions to \eqref{eq:p} in the case $p=2$ can be classified (see \cite{Ta}) up to translations and scale invariance. The classification of the solutions had a great impact in the literature and
  it has been exploited in many issues such as a-priori estimates, blow-up analysis and asymptotic analysis, providing
 striking results. We refer to \cite{chen} for the two dimensional case.\\

 In the general quasilinear case the problem is still open, and we provide a partial answer in this paper in the singular case $1<p<2$ assuming that the nonlinearity $u^{p^*-1}$ is locally Lipschitz continuous in $[0, + \infty )$, which is the case for $p^*\geqslant 2$.\\

\noindent The fact that the problem in the quasilinear setting is  difficult to study depends not only on the lack of general comparison principles for quasilinear operators, but also on the fact that a Kelvin type transformation is not available in this case.\\
 Let us mention that in \cite{DPR, DR} (see also \cite{Szou}) symmetry and monotonicity results have been obtained, in the spirit of \cite{GNN2}, under suitable a-priori assumptions on the solutions and/or the nonlinearity. In the applications such assumptions  are not generally easy to check and this is the reason for which  many issues in the context of a-priori estimates, blow-up analysis and asymptotic analysis are still not sufficiently understood.\\

 \noindent Here we consider  $1<p<2$ with $p^*\geqslant2$ and  we prove, exploiting
    the \emph{Moving Plane Method} \cite{S} and Sobolev's inequality (and a technique introduced in \cite{DP}  to avoid ``local symmetries''), the following

\begin{theorem}\label{radialtheorem}
Let $u\in{D}^{1,p}(\R^N)$ be a (nonnegative and nontrivial, hence) positive solution to \eqref{eq:p} and assume that
 $1< p<2$ and $p^*\geqslant 2$, namely $\frac{2N}{N+2}\leqslant p<2$. \\
 Then  $u$ is radial with respect to some point $x_0\in\mathbb{R}^N$ which is the only critical point of $u$ and it is strictly radially decreasing, i.e. there exists a   $C^1$ function $v:[0,+\infty)\rightarrow (0,+\infty)$ such that $v' (r) <0 $ for any $r>0$ and such that
$u(x)=v(r)$, \ $ r=|x-x_0|$.
\end{theorem}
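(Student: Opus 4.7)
The plan is to run the Moving Plane Method in integral form, exploiting Sobolev's inequality and the weighted comparison estimates available for $-\Delta_p$ when $1<p<2$, and to neutralize local symmetries via the technique of \cite{DP}. Fix a direction, say $e_1$, and for $\lambda\in\R$ set $\Sigma_\lambda = \{x_1<\lambda\}$, $x^\lambda = (2\lambda-x_1,x_2,\dots,x_N)$, $u_\lambda(x) = u(x^\lambda)$, $w_\lambda = (u-u_\lambda)^+$. Testing $-\Delta_p u = u^{p^*-1}$ and $-\Delta_p u_\lambda = u_\lambda^{p^*-1}$ against a cutoff of $w_\lambda$ and subtracting yields
$$\int_{\Sigma_\lambda}\!\bigl(|\nabla u|^{p-2}\nabla u - |\nabla u_\lambda|^{p-2}\nabla u_\lambda\bigr)\cdot\nabla w_\lambda = \int_{\Sigma_\lambda}\!\bigl(u^{p^*-1}-u_\lambda^{p^*-1}\bigr)w_\lambda.$$
For $1<p<2$ the left-hand side dominates $c_p\!\int(|\nabla u|+|\nabla u_\lambda|)^{p-2}|\nabla w_\lambda|^2$; combining this with the pointwise identity $|\nabla w_\lambda|^p = [(|\nabla u|+|\nabla u_\lambda|)^{p-2}|\nabla w_\lambda|^2]^{p/2}(|\nabla u|+|\nabla u_\lambda|)^{p(2-p)/2}$ and H\"older's inequality controls $\|\nabla w_\lambda\|_{L^p(\Sigma_\lambda)}^p$ by the right-hand side times a small gradient tail. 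Since $p^*\geq 2$, the local Lipschitz bound $u^{p^*-1}-u_\lambda^{p^*-1}\leq (p^*-1)u^{p^*-2}w_\lambda$ on $\{w_\lambda>0\}$, joined to H\"older and Sobolev $\|w_\lambda\|_{p^*}\leq S\|\nabla w_\lambda\|_p$, closes the estimate to
$$\|\nabla w_\lambda\|_{L^p(\Sigma_\lambda)}^p \leq C\,\Theta(\lambda)\,\|\nabla w_\lambda\|_{L^p(\Sigma_\lambda)}^p,$$
where $\Theta(\lambda)$ is a product of positive powers of $\|u\|_{L^{p^*}(\Sigma_\lambda\cap\{w_\lambda>0\})}$ and $\|\nabla u\|_{L^p(\Sigma_\lambda\cap\{w_\lambda>0\})}$.

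Next I would start the sliding from $\lambda=-\infty$. Since $u\in L^{p^*}(\R^N)$ and $|\nabla u|\in L^p(\R^N)$, $\Theta(\lambda)\to 0$ as $\lambda\to -\infty$, forcing $\nabla w_\lambda\equiv 0$ and hence $w_\lambda\equiv 0$ for $\lambda\ll 0$. Setting $\lambda_0 = \sup\{\lambda:w_\mu\equiv 0 \text{ in }\Sigma_\mu\text{ for every }\mu\leq\lambda\}$, the $L^{p^*}$ integrability rules out $\lambda_0=+\infty$, and by continuity $u\leq u_{\lambda_0}$ in $\Sigma_{\lambda_0}$.

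The main obstacle is then to upgrade this to $u\equiv u_{\lambda_0}$ in $\Sigma_{\lambda_0}$. The quasilinear setting is troublesome because the strong comparison principle for $-\Delta_p$ fails on $\{\nabla u=0\}$, so a priori $u$ and $u_{\lambda_0}$ could coincide on some connected components of $\Sigma_{\lambda_0}$ and differ strictly on others (\emph{local symmetries}), blocking any attempt to push $\lambda_0$ further to the right. The \cite{DP} technique circumvents this by running the master inequality locally on each connected component $\mathcal{C}$ of $\{u<u_{\lambda_0}\}\cap\Sigma_{\lambda_0}$ with cutoffs supported in $\mathcal{C}\cap\Sigma_{\lambda_0+\eps}$: absolute continuity in $\eps$ of $\int u^{p^*}$ and $\int|\nabla u|^p$ over such slabs forces the local analogue of $\Theta$ to be strictly less than $1$ for small $\eps$, so that $\mathcal{C}=\emptyset$. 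This is precisely where $u\in\mathcal{D}^{1,p}\cap L^{p^*}$ and the assumption $p^*\geq 2$ (local Lipschitz regularity of the nonlinearity) enter essentially.

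Having $u\equiv u_{\lambda_0}$ in $\Sigma_{\lambda_0}$, I would repeat the entire argument for every direction $\nu\in S^{N-1}$, obtaining a hyperplane of symmetry in each direction; standard intersection arguments then exhibit a common point $x_0\in\R^N$ with $u(x)=v(|x-x_0|)$. The monotonicity produced by the moving planes, upgraded to strict inequality on the open set $\{\nabla u\neq 0\}$ via the strong maximum principle \cite{V}, yields $v'(r)<0$ for every $r>0$, which also identifies $x_0$ as the unique critical point of $u$; the $C^{1,\alpha}_{\text{loc}}$ regularity established earlier makes $v$ a $C^1$ function.
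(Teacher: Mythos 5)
Your proposal is correct up through starting the moving plane (apart from a small gloss described below), but the way you describe how to pass from local symmetries to full symmetry is not what the paper does and, more importantly, it does not actually close. You claim the \cite{DP} technique is ``running the master inequality locally on each connected component $\mathcal{C}$ of $\{u<u_{\lambda_0}\}\cap\Sigma_{\lambda_0}$ with cutoffs supported in $\mathcal{C}\cap\Sigma_{\lambda_0+\varepsilon}$'' to force $\mathcal{C}=\emptyset$. This is circular and does not work: on $\mathcal{C}$ one has $u<u_{\lambda_0}$, hence $(u-u_{\lambda_0})^+\equiv 0$ there, so the localized inequality is vacuous; and the existence of a region where $u<u_{\lambda_0}$ is of course perfectly compatible with $u\le u_{\lambda_0}$ and yields no contradiction by itself. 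What actually blocks the moving plane past $\lambda_0$ is the presence of a \emph{local symmetry} component, i.e.\ a connected component $\mathcal{C}^\nu$ of $\Sigma^\nu_{\lambda_0(\nu)}\setminus Z^\nu_{\lambda_0(\nu)}$ on which $u\equiv u^\nu_{\lambda_0(\nu)}$ but which is not the whole cap. The paper's Partial Symmetry theorem (Theorem~\ref{Partial symmetry}) only shows that \emph{at least one} such symmetry component exists (if $u\not\equiv u_{\lambda_0}$ in the whole cap); it does not rule out case~(ii). Ruling out (ii) is the hard part, and it is not done by a local version of the integral estimate.

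The actual mechanism, following \cite{DP,DPR}, is a topological/geometric argument that moves hyperplanes in varying directions simultaneously. One shows that $\lambda_0(\nu)$ is continuous (Step~1 of the proof of Theorem~\ref{radialtheorem}), that for directions $\nu$ near $\nu_0$ one of the symmetrized components $\tilde C^{\nu_0}_i$ must also be a symmetry component for the direction $\nu$ (Step~2), and then, via a nested-intersection/Cantor argument, that a single $\tilde C^{\nu_0}_i$ is a symmetry component for an entire neighbourhood of directions (Step~3). Reflecting a boundary point $x_0\in\partial \tilde C^{\nu_0}_i$ by the corresponding family of critical hyperplanes produces a compact set $\Gamma\subset Z_u$ on which $u$ is constant and whose projection on $T^{\nu_1}_{\lambda_0(\nu_1)}$ contains an $(N-1)$-dimensional ball; an interior ball touching $\Gamma$ together with Hopf's lemma and the monotonicity of $u$ then yields $\nabla u\neq 0$ on $\Gamma$, a contradiction (Step~4). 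Nothing resembling this appears in your sketch, and without it the proof cannot reach the conclusion $u\equiv u_{\lambda_0(\nu)}^\nu$ in $\Sigma^\nu_{\lambda_0(\nu)}$. As a secondary point, your initialization of the moving plane also understates the difficulty of the borderline case $p^*=2$: there the factor $u^{p^*-2}$ in the right-hand side estimate disappears and one must control $\int_{\Sigma_\lambda}(|\nabla u|+|\nabla u_\lambda|)^p\chi_{\{u\ge u_\lambda\}}$, whose $|\nabla u_\lambda|$ contribution is \emph{not} automatically small as $\lambda\to-\infty$ (it carries the reflected bulk of $\|\nabla u\|_{L^p}$); the paper needs a level-set and measure argument (the $\hat\delta,\bar\delta$ construction in Step~1) to handle this, which your uniform ``$\Theta(\lambda)\to 0$'' elides.
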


 \noindent Remarkably, this allows us  to exploit the classification results in \cite{bidaut,ClassGV} and deduce that, for $\frac{2N}{N+2}\leqslant p<2$, all the solutions to \eqref{eq:p} belonging to ${D}^{1,p}(\R^N)$ are given by the following expression:
\begin{equation}\label{classificationveron}
\mathcal{U}_{\lambda,x_0}:=\left[\frac{\lambda^{\frac{1}{p-1}}(N^{\frac{1}{p}}(\frac{N-p}{p-1})^{\frac{p-1}{p}})
}{
\lambda^{\frac{p}{p-1}}+|x-x_0|^{\frac{p}{p-1}}
}\right]^{\frac{N-p}{p}},\quad \lambda>0\qquad x_0\in\mathbb{R}^N.
\end{equation}
Note that, by \cite{Ta}, it follows that the family of functions given by \eqref{classificationveron} are minimizers
to
\begin{equation}\label{SSS}
S:=\,\min_{\underset{\varphi\neq 0}{\varphi\in \mathcal {D}^{1,p}(\R^N)} }\dfrac{\int_{\mathbb{R}^N}|\nabla\varphi|^pdx}{\left(\int_{\mathbb{R}^N}\varphi^{p^*}dx\right)^{\frac{p}{p^*}}}\,.
\end{equation}

The adaptation of the moving plane procedure to the case of equations in bounded domains involving the $p$-Laplacian  in the singular
case $1<p<2$ was carried out in \cite{lucio,DP}. The technique  developed in \cite{DP} that allows to  pass from local symmetry results to symmetry results  was then exploited in  the cited papers  \cite{DPR, DR} to prove symmetry results in $\R^N$, again in the singular
case $1<p<2$ . We will also exploit and explain this technique in Section 2.\\
 The degenerate case $p>2$  in bounded smooth domains and positive nonlinearities
 was considered in \cite{DS1},  \cite{DS2} using extensively  weighted Poincar\'{e}'s  type inequalities, that are not available in unbounded domains.
The problem in half spaces has been studied in \cite{DS3,FMS1,FMS}.\\

\noindent We point out also some first consequences of our symmetry result. \\
Once that Theorem \ref{radialtheorem} is available, exploiting the abstract results in  \cite{willem},
we can recover the compactness result  of M. Struwe \cite{struwe}, for the case of $p$-Laplace equations.\\
 To state this result, for $\varphi \in  {W}^{1,p}(\Omega)$,  let us define first the energy functional
\begin{equation}\nonumber
J(\varphi)=\frac 1p\int_{\Omega}|\nabla \varphi|^pdx-\frac{1}{p^*}\int_{\Omega}\varphi^{p^*}dx\,,
\end{equation}
and for  $\varphi \in  \mathcal {D}^{1,p}(\R^N)$ let us also define,
\begin{equation}\nonumber
J_\infty(\varphi)=\frac 1p\int_{\R^N}|\nabla \varphi|^pdx-\frac{1}{p^*}\int_{\R^N}\varphi^{p^*}dx\,.
\end{equation}
As a consequence of our symmetry result, we have the following
\begin{theorem}\label{triscorojfmdmgmgmgbmbxbdkjgdfkghk}
Let $1<p<2$ and assume that $p^*\geqslant 2$, namely $\frac{2N}{N+2}\leqslant p<2$.
Let $\{u_\e\}\subset W^{1,p}_0(\Omega)$ be
such that
\begin{center}
$J(u_\varepsilon)\underset{\varepsilon\rightarrow 0}{\longrightarrow} c\quad$ and
 $\quad J'(u_\varepsilon)\underset{\varepsilon\rightarrow 0}{\longrightarrow} 0, \,\,\, \text{in}\,\, W^{-1,p}(\Omega)$
\end{center}
and
\[
\|u_\varepsilon^-\|_{L^{p^*}(\Omega)}\rightarrow 0 , \,\quad \text{as}\,\,\, \varepsilon\rightarrow 0\,.
\]
 Then, passing to a subsequence if  it is necessary, there exist $\lambda_i\in\mathbb{R}_+$, $x_i\in\mathbb{R}^N$, sequences
$\{y_\varepsilon^i\} \subset \Omega$  and $\{\delta_\varepsilon^i\} \subset \R_+,$ with $i=1,\ldots,k$,  satisfying
$$
\frac{1}{\delta_\varepsilon^i}\, \textrm{dist} \, (y_\varepsilon^i,\partial\Omega)\rightarrow \infty , \,\quad \text{as}\,\,\, \varepsilon\rightarrow 0,
$$
and
\begin{equation}\label{trisprojection}
\|u_\varepsilon(\cdot)-v_0-\overset{k}{\underset{i=1}{\sum}}
(\delta_\varepsilon ^i)^{(p-N)/p}\mathcal{U}_{\lambda_i,x_i} ((\cdot-y_\varepsilon^i)/\delta_\varepsilon^i)\|_{\mathcal {D}^{1,p}(\Omega)}\rightarrow 0, \quad \text{as}\,\,\,\varepsilon\rightarrow 0\,,
\end{equation}
for some $v_0\in W^{1,p}_0(\Omega)$, nonnegative solution to $-\Delta_p w =w^{p^*-1}$ in $\Omega$.\\
Furthermore,
\begin{equation}\nonumber
\|u_\varepsilon\|_{\mathcal {D}^{1,p}(\Omega)}^p\rightarrow \|v_0\|_{\mathcal {D}^{1,p}(\Omega)}^p
+
\overset{k}{\underset{i=1}{\sum}}
\|\mathcal{U}_{\lambda_i,x_i} \|_{\mathcal {D}^{1,p}(\Omega)}^p, \quad \text{as}\,\,\,\varepsilon\rightarrow 0\,,
\end{equation}
and
\begin{equation}\label{trisssprojectionenetimateenery}
J(v_0)+
\overset{k}{\underset{i=1}{\sum}}
J_\infty (\mathcal{U}_{\lambda_i,x_i} )= c\,.
\end{equation}
\end{theorem}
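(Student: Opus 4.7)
I would follow the by now classical Struwe-type decomposition strategy, with Theorem \ref{radialtheorem} together with \cite{bidaut,ClassGV} supplying the indispensable classification of global profiles. The plan is to iterate a concentration-compactness procedure, at each step either gaining strong convergence or peeling off one bubble of the form $\mathcal{U}_{\lambda,x_0}$.

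First, I would establish that $\{u_\varepsilon\}$ is bounded in $\mathcal{D}^{1,p}(\Omega)$ by combining $J(u_\varepsilon)\to c$ with $\langle J'(u_\varepsilon),u_\varepsilon\rangle = o(\|u_\varepsilon\|_{\mathcal{D}^{1,p}})$, which gives
\[
\Bigl(\tfrac{1}{p}-\tfrac{1}{p^*}\Bigr)\|u_\varepsilon\|_{\mathcal{D}^{1,p}(\Omega)}^{p}=c+o(1)+o\bigl(\|u_\varepsilon\|_{\mathcal{D}^{1,p}(\Omega)}\bigr).
\]
Then I would extract a subsequence with $u_\varepsilon\rightharpoonup v_0$ in $W^{1,p}_0(\Omega)$ and a.e. in $\Omega$. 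A Boccardo-Murat type argument yields $\nabla u_\varepsilon\to\nabla v_0$ a.e., which is enough to pass to the limit in $-\Delta_p u_\varepsilon=u_\varepsilon^{p^*-1}+o(1)$ and to conclude that $v_0$ is a weak nonnegative solution of the limit equation in $\Omega$; nonnegativity of $v_0$ comes from the assumption $\|u_\varepsilon^-\|_{L^{p^*}(\Omega)}\to 0$.

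Next I would set $r_\varepsilon:=u_\varepsilon-v_0$ and check, via a Brezis-Lieb type decomposition applied both to the $L^{p}$-norm of gradients and to the $L^{p^*}$-norm of values, that $\{r_\varepsilon\}$ is again a Palais-Smale sequence at level $c-J(v_0)$ with $r_\varepsilon\rightharpoonup 0$. If $r_\varepsilon\to 0$ strongly the theorem is proved with no bubbles; otherwise one has $\int|\nabla r_\varepsilon|^p\geqslant S^{N/p}/2$, and P.-L.~Lions' concentration-compactness principle produces points $y_\varepsilon^{1}\in\Omega$ and scales $\delta_\varepsilon^{1}\to 0^+$ such that
\[
\tilde r_\varepsilon(x):=(\delta_\varepsilon^{1})^{(N-p)/p}\,r_\varepsilon\bigl(y_\varepsilon^{1}+\delta_\varepsilon^{1}x\bigr)
\]
converges weakly in $\mathcal{D}^{1,p}$ to a nontrivial $U_1$. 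At this point one must show that $U_1$ is a global nonnegative solution on $\mathbb{R}^N$: this requires ruling out boundary concentration by proving $\textrm{dist}(y_\varepsilon^{1},\partial\Omega)/\delta_\varepsilon^{1}\to\infty$, which reduces to a Liouville-type nonexistence statement for nonnegative $\mathcal{D}^{1,p}$-solutions of the critical equation in a half-space with zero Dirichlet datum, available in the range $1<p<2$ thanks to \cite{FMS}. Once $U_1$ solves the equation on the whole $\mathbb{R}^N$, Theorem \ref{radialtheorem} combined with \cite{bidaut,ClassGV} forces $U_1=\mathcal{U}_{\lambda_1,x_1}$ for some $\lambda_1>0$ and $x_1\in\mathbb{R}^N$, and by absorbing the parameters into $\delta_\varepsilon^{1}$, $y_\varepsilon^{1}$ one may read them as the bubble parameters.

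Finally I would iterate: subtracting the rescaled bubble $(\delta_\varepsilon^{1})^{(p-N)/p}\mathcal{U}_{\lambda_1,x_1}((\cdot-y_\varepsilon^{1})/\delta_\varepsilon^{1})$ from $r_\varepsilon$ yields a new Palais-Smale sequence at energy $c-J(v_0)-J_\infty(\mathcal{U}_{\lambda_1,x_1})$ with still vanishing negative part, and one repeats the procedure. Since every extracted bubble carries energy at least $\frac{1}{N}S^{N/p}$, after at most $k=\lfloor Nc/S^{N/p}\rfloor+1$ iterations the residual converges strongly to zero, and assembling the partial identities yields the decomposition \eqref{trisprojection}, the norm splitting, and the energy identity \eqref{trisssprojectionenetimateenery}. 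The hard part I anticipate is the identification of each rescaled limit as a \emph{global} $\mathcal{D}^{1,p}(\mathbb{R}^N)$-solution rather than a half-space one: since no Kelvin transform is available for $-\Delta_p$, boundary concentration must be excluded by direct estimates, relying on the half-space results cited in the Introduction, the uniform $L^\infty$-bounds of Remark \ref{dhfgflxvvjdf}, and the $C^{1,\alpha}_{\mathrm{loc}}$ regularity that allows the a.e.~gradient convergence needed to pass to the limit in the $p$-Laplacian.
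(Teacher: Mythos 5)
Your proposal is mathematically sensible but is considerably longer and more involved than what the paper actually does. The paper's proof of this theorem is essentially a one-liner: it invokes Theorem 1.2 of Mercuri and Willem \cite{willem}, which is an \emph{abstract} global compactness result for the $p$-Laplacian that already gives the Struwe-type decomposition with profiles that are nonnegative $\mathcal{D}^{1,p}(\mathbb{R}^N)$-solutions of the critical equation, but without any explicit form for those profiles. The only new ingredient the paper adds is Theorem \ref{radialtheorem}, combined with \cite{bidaut,ClassGV}, which \emph{classifies} the possible profiles as exactly the functions $\mathcal{U}_{\lambda,x_0}$; plugging this classification into \cite{willem} gives \eqref{trisprojection}--\eqref{trisssprojectionenetimateenery} immediately. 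What you propose is, in effect, to reprove Mercuri--Willem's theorem from scratch: bounded Palais--Smale sequence, Boccardo--Murat a.e.\ gradient convergence, Brezis--Lieb splitting, iterated bubble extraction, and exclusion of boundary concentration via a half-space Liouville statement. That is the content of \cite{willem}, not something the present paper re-derives, and you would have to be careful about the half-space step (the Dirichlet half-space nonexistence for the critical $p$-Laplacian in the range $1<p<2$ is delicate precisely because no Kelvin transform is available). In short: your route is self-contained and illuminating, but the paper deliberately delegates all of that machinery to \cite{willem} and isolates the classification of profiles as its sole contribution to this theorem.
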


In a forthcoming paper (\cite{mercuri}),  exploiting Theorem \ref{triscorojfmdmgmgmgbmbxbdkjgdfkghk},  it is proved
 that the critical problem  for the p-Laplacian  can admit  positive (nontrivial)
solutions in some bounded domains that are not star shaped. This extends to the quasilinear context the classical Coron's result \cite{coron}.\\

\noindent Furthermore, let us point out that, as in the semilinear case (see e.g.  \cite{bari,wei,han,rey}), our result can be applied in the  asymptotic analysis of positive solutions to  slightly subcritical problems.
More precisely, given  $\e>0$ small, let us consider the following problem, where $\Omega $   is a bounded  smooth domain
\begin{equation}\label{aluigisegustaelcordero}
\begin{cases}
-\Delta_p u_{\varepsilon}=u_{\varepsilon}^{p^*-1-\varepsilon}\hspace{5pt}&\hbox{in}\;\;\Omega,\\
u_{\varepsilon}=0, & \text{ on } \partial\Omega\,,
\end{cases}
\end{equation}

An analysis of \emph{Mountain Pass} solutions to \eqref{aluigisegustaelcordero} has been  developed in
\cite{peral2} (see also \cite{peral3}). More precisely, exploiting the \emph{concentration compactness principle}
of P.L. Lions (see \cite{L1,L2}), it is proved in \cite{peral2} that the sequence of mountain pass solutions
 converges in the sense of measures to a unique Dirac $\delta$  with mass $S^{\frac{N}{p}}$, being
$S$ the best Sobolev's constant (see \eqref{SSS}). Namely, the limit energy level is exactly $c=S^{\frac{N}{p}}/N$ in this case.\\

 \noindent On the contrary, when dealing with  higher energy level solutions, many problems remained unsolved because of the lack
 of a classification result for the critical problem in $\R^N$.\\
 In \cite[Theorem 9]{peral2} the authors also considered the second energy level   $ S^{\frac{N}{p}}/N < c<2S^{\frac{N}{p}}/N$
 proving that the following alternative holds:
 \begin{itemize}
 \item[(a)] the sequence converges to a solution of the critical problem in $\Omega$.
 \item[(b)]  the sequence converges to a Dirac mass.
 \end{itemize}
 As a consequence of Theorem \ref{triscorojfmdmgmgmgbmbxbdkjgdfkghk}, we deduce that the second alternative (b) cannot occur in the case
 \[
  S^{\frac{N}{p}}/N < c<2S^{\frac{N}{p}}/N\,.
  \]
  More precisely, we have the following:
\begin{theorem}\label{corojfmdmgmgmgbmbxbdkjgdfkghk}
Let $1<p<2$ and assume that $p^*\geqslant 2$, namely $\frac{2N}{N+2}\leqslant p<2$.
Let $\{u_\e\}$ be a family of nonnegative solutions to \eqref{aluigisegustaelcordero} such that
\begin{center}
$J(u_\varepsilon)\underset{\varepsilon\rightarrow 0}{\longrightarrow} c\quad$ and
 $\quad J'(u_\varepsilon)\underset{\varepsilon\rightarrow 0}{\longrightarrow} 0, \,\,\, \text{in}\,\, W^{-1,p}(\Omega)$
\end{center}
with $\frac{S^{\frac{N}{p}}}{N}\leqslant c<2 \frac{S^{\frac{N}{p}}}{N}$.
Then, the following alternative holds:

\begin{itemize}
\item[$(i)$] the function $v_0$ in  \eqref{trisprojection} is not trivial. In this case, \eqref{trisprojection} is fulfilled with $k=0$, namely
    \begin{equation}\nonumber
\|u_\varepsilon(\cdot)-v_0\|_{\mathcal {D}^{1,p}(\Omega)}\rightarrow 0, \quad \text{as}\,\,\,\varepsilon\rightarrow 0\,.
\end{equation}
\item[$(ii)$]
The function $v_0$ in  \eqref{trisprojection} is  trivial. This case can occur only if $c=\frac{S^{\frac{N}{p}}}{N}$ and  \eqref{trisprojection} is fulfilled with $k=1$, namely
there exist $\lambda\in\mathbb{R}_+$, $x_0\in\mathbb{R}^N$, sequences
$\{y_\varepsilon\} \subset \Omega$  and $\{\delta_\varepsilon\} \subset \R_+,$  satisfying
$$
\frac{1}{\delta_\varepsilon}\, \textrm{dist} \, (y_\varepsilon,\partial\Omega)\rightarrow \infty , \,\quad \text{as}\,\,\, \varepsilon\rightarrow 0,
$$
and
\begin{equation}\label{djfhjkdhgkghkvcgvcgvcgdcvg}
\|u_\varepsilon(\cdot)-\delta_\varepsilon^{(p-N)/p}\mathcal{U}_{\lambda,x_0} ((\cdot-y_\varepsilon)/\delta_\varepsilon)\|_{\mathcal {D}^{1,p}(\Omega)}\rightarrow 0, \quad \text{as}\,\,\,\varepsilon\rightarrow 0\,.
\end{equation}
 \end{itemize}
 As a consequence, if $S^{\frac{N}{p}}/N < c<2S^{\frac{N}{p}}/N$, then $(i)$ occurs.
\end{theorem}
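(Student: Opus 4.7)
The plan is to derive Theorem \ref{corojfmdmgmgmgbmbxbdkjgdfkghk} as a direct consequence of the Struwe-type decomposition in Theorem \ref{triscorojfmdmgmgmgbmbxbdkjgdfkghk}, by carefully bookkeeping the energy contributions. First, I would verify that the hypotheses of Theorem \ref{triscorojfmdmgmgmgbmbxbdkjgdfkghk} are satisfied: the sequence $\{u_\varepsilon\}$ is a Palais--Smale sequence for $J$ at level $c$, and since the $u_\varepsilon$ are nonnegative, the condition $\|u_\varepsilon^-\|_{L^{p^*}(\Omega)}\to 0$ holds trivially. Thus, up to a subsequence, we obtain the decomposition \eqref{trisprojection} with some $v_0\in W^{1,p}_0(\Omega)$ nonnegative solving the critical problem in $\Omega$, and $k$ bubbles $\mathcal{U}_{\lambda_i,x_i}$, together with the energy identity \eqref{trisssprojectionenetimateenery}.

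The main step is to compute (or bound below) each summand in \eqref{trisssprojectionenetimateenery}. For a bubble, using that $\mathcal{U}_{\lambda_i,x_i}$ solves $-\Delta_p w=w^{p^*-1}$ in $\R^N$ and is a minimizer of the Sobolev quotient \eqref{SSS}, one tests the equation against $\mathcal{U}_{\lambda_i,x_i}$ to obtain $\int_{\R^N}|\nabla \mathcal{U}_{\lambda_i,x_i}|^p=\int_{\R^N} \mathcal{U}_{\lambda_i,x_i}^{p^*}=:A$, and from the Sobolev identity $A=S\,A^{p/p^*}$ one deduces $A=S^{N/p}$, so that
\[
J_\infty(\mathcal{U}_{\lambda_i,x_i})=\Bigl(\frac{1}{p}-\frac{1}{p^*}\Bigr)A=\frac{S^{N/p}}{N}.
\]
For $v_0$, the same test against itself gives $\int_\Omega|\nabla v_0|^p=\int_\Omega v_0^{p^*}$, and if $v_0\not\equiv0$ Sobolev's inequality yields $\int_\Omega v_0^{p^*}\geq S^{N/p}$, hence $J(v_0)\geq S^{N/p}/N$; while clearly $J(v_0)=0$ if $v_0\equiv 0$.

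Plugging these estimates into \eqref{trisssprojectionenetimateenery} produces the dichotomy. If $v_0\not\equiv0$, then
\[
c=J(v_0)+k\,\frac{S^{N/p}}{N}\geq (k+1)\,\frac{S^{N/p}}{N},
\]
so the constraint $c<2S^{N/p}/N$ forces $k=0$, which is case $(i)$. If $v_0\equiv 0$, then $c=k\,S^{N/p}/N$, and the bounds $S^{N/p}/N\leq c<2S^{N/p}/N$ force $k=1$ and $c=S^{N/p}/N$ exactly, which is case $(ii)$. The final assertion is then immediate: if $S^{N/p}/N<c<2S^{N/p}/N$, case $(ii)$ is excluded by its forced energy level, so only case $(i)$ can hold.

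I do not anticipate a serious obstacle: the only nontrivial input besides Theorem \ref{triscorojfmdmgmgmgbmbxbdkjgdfkghk} is the lower bound $J(v_0)\geq S^{N/p}/N$ for nontrivial critical solutions in $\Omega$, which is a standard Sobolev computation. The crucial ingredient that makes the whole argument possible is precisely the classification \eqref{classificationveron} provided by Theorem \ref{radialtheorem}, which fixes the energy of each bubble to be exactly $S^{N/p}/N$; without it, one could not rule out bubbles of smaller energy and hence could not exclude alternative $(b)$ quoted from \cite{peral2}.
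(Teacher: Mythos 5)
Your proposal is correct and follows essentially the same route as the paper: invoke Theorem \ref{triscorojfmdmgmgmgbmbxbdkjgdfkghk}, compute $J_\infty(\mathcal{U}_{\lambda_i,x_i})=S^{N/p}/N$ from the minimality in \eqref{SSS}, bound $J(v_0)$ from below via Sobolev, and read off the alternative from the energy identity \eqref{trisssprojectionenetimateenery}. The only cosmetic difference is that the paper records the strict bound $J(v_0)>S^{N/p}/N$ (using that the Sobolev constant is not attained in bounded domains), whereas your non-strict bound $J(v_0)\geq S^{N/p}/N$ already suffices for the dichotomy given $c<2S^{N/p}/N$.
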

\begin{remark}
For the reader's convenience we point out that, changing the parameters,  \eqref{djfhjkdhgkghkvcgvcgvcgdcvg} in Theorem \ref{corojfmdmgmgmgbmbxbdkjgdfkghk} holds with
  $\lambda=1$ and $x_0=0$.
  Namely,
$y_\varepsilon$ has to be replaced by $y_\varepsilon+\delta_\varepsilon x_0$ and $\delta_\varepsilon$ has to be replaced by $\lambda\delta_\varepsilon$.
\end{remark}
\begin{remark}
Note that in Theorem \ref{corojfmdmgmgmgbmbxbdkjgdfkghk} the case $(i)$ is not possible if the domain is star shaped. In fact in this case, by a Pohozaev type identity (see \cite{lucdin,deg,guedda}), it follows that $v_0$ has to be trivial.\par
 If instead the domain is not star shaped, then $v_0$ might be not trivial. \\
This is exactly what happens in the Coron type problem studied in \cite{mercuri}, which exploits our result. In this case the energy level is, in fact, strictly grater than $S^{\frac{N}{p}}/N$.
\end{remark}

\noindent The paper is organized as follows: in Section \ref{skudgskughgkvccb} we study the symmetry of the solutions in $\R^N$ and we prove Theorem \ref{radialtheorem}. In Section \ref{sectioncorta} we prove
 Theorem \ref{triscorojfmdmgmgmgbmbxbdkjgdfkghk} and Theorem \ref{corojfmdmgmgmgbmbxbdkjgdfkghk}.

\section{Symmetry of solutions}\label{skudgskughgkvccb}
\begin{definition}\label{sol} We say that $u$ is a nontrivial nonnegative weak solution to \eqref{eq:p} if $u\in \mathcal {D}^{1,p}(\R^N)$, $u \not \equiv 0$,
 $u \geq 0$ in $\R^N$, and
$$\int_{\mathbb{R}^N}|\nabla u|^{p-2}\langle\nabla u,\nabla \phi\rangle\, dx=\int_{\mathbb{R}^N}u^{p^*-1}\phi \,dx,\quad \forall \phi \in C^{\infty}_c(\mathbb{R}^N).$$
\end{definition}
As already remarked a nontrivial nonnegative solution is in fact  bounded, positive in $\R^N$ and belongs to the class $C^{1,\alpha}_{loc}(\mathbb{R}^N)$.\\
In the sequel we use the following standard estimate, whose proof can be found e.g. in \cite{lucio}.
 \begin{lemma}
$ \forall \, p>1 $   there exist positive constants $C_1, C_2$,  depending on $p$, such that  $\; \forall \, \eta, \eta' \in  \mathbb{R}^{N}$ with $|\eta|+|\eta'|>0$
\begin{eqnarray}\label{eq:inequalities}
[|\eta|^{p-2}\eta-|\eta'|^{p-2}\eta'][\eta- \eta'] \geq C_1 (|\eta|+|\eta'|)^{p-2}|\eta-\eta'|^2, \\ \nonumber
\\ \nonumber
||\eta|^{p-2}\eta-|\eta'|^{p-2}\eta'| \leq C_2|\eta -\eta'|^{p-1}, \qquad \text{if }\quad 1<p \leq2.
\end{eqnarray}
 \end{lemma}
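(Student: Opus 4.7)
The plan is to base both inequalities on the integral representation
\[
V(\eta)-V(\eta')=\int_0^1 DV(\xi_t)(\eta-\eta')\,dt,\qquad \xi_t:=(1-t)\eta'+t\eta,
\]
where $V(\eta):=|\eta|^{p-2}\eta$ is the vector field driving the $p$-Laplacian; both statements are classical algebraic facts and the argument can be found, e.g., in \cite{lucio}. A direct computation gives $DV(\xi)=|\xi|^{p-2}\bigl(I+(p-2)\,\widehat\xi\otimes\widehat\xi\bigr)$ for $\xi\neq 0$ with $\widehat\xi:=\xi/|\xi|$, whose eigenvalues $|\xi|^{p-2}$ (multiplicity $N-1$) and $(p-1)|\xi|^{p-2}$ yield the two-sided pointwise bound
\[
\min(1,p-1)\,|\xi|^{p-2}|h|^2\le\langle DV(\xi)h,h\rangle,\qquad \|DV(\xi)\|\le\max(1,p-1)\,|\xi|^{p-2}.
\]
For $p>1$ the singularity $|\xi|^{p-2}$ is integrable along any line segment, so the representation is legitimate after a routine approximation when $\xi_t$ meets $0$.

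For the first inequality, I would plug in $h=\eta-\eta'$ and reduce the problem to showing $\int_0^1|\xi_t|^{p-2}\,dt\ge c_p(|\eta|+|\eta'|)^{p-2}$. Assuming without loss of generality $|\eta|\ge|\eta'|$, the triangle inequality yields $|\xi_t|\ge t|\eta|-(1-t)|\eta'|\ge\tfrac12|\eta|$ for $t\in[3/4,1]$, which is comparable to $|\eta|+|\eta'|$; for $p\ge 2$, this forces $|\xi_t|^{p-2}\ge c_p(|\eta|+|\eta'|)^{p-2}$ on $[3/4,1]$ and integration finishes. For $1<p<2$, since $p-2<0$, the trivial upper bound $|\xi_t|\le|\eta|+|\eta'|$ on all of $[0,1]$ inverts to $|\xi_t|^{p-2}\ge(|\eta|+|\eta'|)^{p-2}$, and integrating over the full interval closes the argument.

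For the second inequality, restricted to $1<p\le 2$, I would again assume $|\eta|\ge|\eta'|$ and split into two regimes. If $|\eta'|\le|\eta-\eta'|$, then $|\eta|\le 2|\eta-\eta'|$ and the crude bound $|V(\eta)-V(\eta')|\le|\eta|^{p-1}+|\eta'|^{p-1}\le 2^p|\eta-\eta'|^{p-1}$ is immediate. If instead $|\eta'|>|\eta-\eta'|>0$, the segment avoids the origin since $|\xi_t|\ge|\eta'|-t|\eta-\eta'|>0$, and integrating the upper eigenvalue bound explicitly gives
\[
|V(\eta)-V(\eta')|\le|\eta-\eta'|\int_0^1(|\eta'|-t|\eta-\eta'|)^{p-2}\,dt=\frac{|\eta'|^{p-1}-(|\eta'|-|\eta-\eta'|)^{p-1}}{p-1},
\]
which is bounded by $|\eta-\eta'|^{p-1}/(p-1)$ thanks to subadditivity of $x\mapsto x^{p-1}$ on $[0,\infty)$ (valid because $p-1\in(0,1]$). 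The main technical nuisance throughout is the non-uniform singularity of $DV$ as $\xi\to 0$ when $p<2$: it is what obstructs a single uniform estimate and forces the case split, both in the lower bound $\int|\xi_t|^{p-2}dt\ge c_p(|\eta|+|\eta'|)^{p-2}$ and in the Hölder-type upper bound.
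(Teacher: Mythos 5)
The paper does not actually present a proof of this lemma; it only remarks that ``the proof can be found e.g.\ in \cite{lucio}.'' Your argument --- writing $V(\eta)-V(\eta')=\int_0^1 DV(\xi_t)(\eta-\eta')\,dt$ for $V(\eta)=|\eta|^{p-2}\eta$, reading off the eigenvalues $|\xi|^{p-2}$ and $(p-1)|\xi|^{p-2}$ of $DV(\xi)$, bounding $\int_0^1|\xi_t|^{p-2}\,dt$ below for the monotonicity inequality, and splitting into the regimes $|\eta'|\le|\eta-\eta'|$ and $|\eta'|>|\eta-\eta'|$ for the H\"older-type upper bound --- is correct and is in fact the standard proof that the cited reference employs; the case analysis, the treatment of the integrable singularity of $|\xi_t|^{p-2}$ when the segment crosses the origin, and the use of subadditivity of $x\mapsto x^{p-1}$ for $p\le 2$ are all sound.
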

Next we recall some known results about solutions of
differential inequalities involving the $p$ - Laplace operator.  We
begin
with a strong comparison principle whose proof can be found in
\cite{lucio} (see also \cite{PSB}).
\begin{theorem}[Strong comparison principle]
Suppose that $\Omega$ is a domain in $\mathbb{R} ^N$ and $u, v \in C^1
(\Omega)$
weakly solve
\begin{eqnarray}\nonumber
\begin{array}{llll}
\left \{ \begin{array}{llll}
- \Delta_p u &\geq& f(u) & {\rm in} \; \Omega ,\\
- \Delta_p v &\leq& f(v) & {\rm in} \; \Omega ,
\end{array}\right.\end{array}
\end{eqnarray}
with $f : \mathbb{R} \rightarrow \mathbb{R} $ locally Lipschitz
continuous .\\
 Suppose that $1 < p < \infty$ and define
$Z^u_v =
\{x \in
\Omega : \nabla u (x) = \nabla v (x) = 0 \}$.  If $ u \geq v \; {\rm in}\; \Omega$
and
there exists $x_0 \, \in \, \Omega \setminus Z^u_v$ with $u(x_0) =
v(x_0)$,
then $u \equiv v$ in the
connected component of $\Omega \setminus Z^u_v \;$ containing $x_0$.
\end{theorem}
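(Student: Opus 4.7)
The plan is to reduce the quasilinear inequality to a linear elliptic inequality on the open set $\Omega \setminus Z^u_v$ and then invoke the classical strong maximum principle. Set $w := u-v$; by hypothesis $w \in C^1(\Omega)$, $w \geq 0$ in $\Omega$, and $w(x_0)=0$. Let $C$ denote the connected component of $\Omega \setminus Z^u_v$ containing $x_0$. The goal is to show $w \equiv 0$ on $C$.

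First I would subtract the two differential inequalities. Since $u,v\in C^1(\Omega)$ are locally bounded and $f$ is locally Lipschitz, on any relatively compact subdomain $K \subset\subset \Omega$ one has a constant $L=L(K)\geq 0$ with $f(u)-f(v)\geq -L\,w$, so that
\[
-\dvg\bigl(|\nabla u|^{p-2}\nabla u - |\nabla v|^{p-2}\nabla v\bigr) + L\,w \;\geq\; 0
\quad\text{weakly in }K.
\]
Next I would linearize the left-hand vector field using the fundamental theorem of calculus applied to the map $\eta \mapsto |\eta|^{p-2}\eta$, writing
\[
|\nabla u|^{p-2}\nabla u - |\nabla v|^{p-2}\nabla v \;=\; A(x)\,\nabla w,
\qquad
A(x) := \int_0^1 D_\eta\bigl(|\eta|^{p-2}\eta\bigr)\Big|_{\eta=\eta_t(x)}\,dt,
\]
with $\eta_t(x):=(1-t)\nabla v(x)+t\nabla u(x)$. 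Recall that $D_\eta(|\eta|^{p-2}\eta)=|\eta|^{p-2}\bigl(I+(p-2)\hat\eta\otimes\hat\eta\bigr)$ has eigenvalues between $\min(1,p-1)|\eta|^{p-2}$ and $\max(1,p-1)|\eta|^{p-2}$, hence is symmetric positive definite whenever $\eta\neq 0$.

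The key structural observation is that at each $x\in C$ one has $|\nabla u(x)|+|\nabla v(x)|>0$, so the segment $t\mapsto \eta_t(x)$ passes through the origin at most at one value of $t$; consequently the integrand is well-defined for a.e.\ $t\in[0,1]$ and $A(x)$ is a symmetric positive definite matrix. By the continuity of $\nabla u$ and $\nabla v$, on any compact $K\subset C$ the entries of $A(\cdot)$ are continuous and the eigenvalues are uniformly bounded above and away from $0$ by positive constants (one uses that $|\eta_t(x)|$ is uniformly bounded below for $t$ in a fixed subinterval of $[0,1]$, depending on whether $\nabla u$ or $\nabla v$ dominates at $x$; a compactness and continuity argument gives uniform ellipticity on $K$). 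Hence $w$ satisfies the linear uniformly elliptic differential inequality
\[
-\dvg\bigl(A(x)\,\nabla w\bigr) + L\,w \;\geq\; 0 \quad \text{weakly on } K,
\]
with $w\geq 0$ and the interior zero $w(x_0)=0$.

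Finally I would apply the classical strong maximum principle for linear uniformly elliptic operators with bounded measurable coefficients and nonnegative zero-order term (Gilbarg--Trudinger, Thm.~8.19) to conclude that $w\equiv 0$ on any such $K$ containing $x_0$. Covering $C$ by an increasing sequence of compact connected subsets containing $x_0$ yields $w\equiv 0$ on $C$, i.e.\ $u\equiv v$ on the connected component, as required. The main technical obstacle is the singular case $1<p<2$, where $|\eta|^{p-2}$ blows up as $\eta\to 0$: one has to argue that even when $\nabla u(x)$ or $\nabla v(x)$ (but not both) vanishes, the factor $|\eta_t(x)|^{p-2}$ is still integrable in $t$ and the integral $A(x)$ remains a bounded, continuous, positive definite matrix on compact subsets of $C$. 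Once this local integrability and uniform ellipticity on compacts is secured, the remainder is a direct application of the linear strong maximum principle.
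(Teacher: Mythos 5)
Your argument is essentially correct and is in fact the standard linearization proof of the strong comparison principle; the paper itself gives no proof of this statement but cites Damascelli (Ann.\ IHP 1998, \cite{lucio}) and Pucci--Serrin \cite{PSB}, both of which proceed along the same lines: write the difference of fluxes as $A(x)\nabla w$ with $A(x)=\int_0^1 D_\eta(|\eta|^{p-2}\eta)\big|_{\eta_t(x)}\,dt$, check that $A$ is uniformly elliptic on compact subsets of $\Omega\setminus Z^u_v$, absorb the nonlinearity $f(u)-f(v)$ into a zero-order term via the local Lipschitz bound, and invoke the classical weak Harnack inequality (or the Serrin/Hopf lemma it implies) for linear divergence-form operators with bounded measurable coefficients. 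One small overclaim: you assert $A(\cdot)$ is continuous on compacts of $C$, but when the segment $t\mapsto\eta_t(x)$ passes through the origin the direction $\hat\eta_t$ oscillates near the crossing, so continuity of $A$ is not obvious; fortunately it is also not needed, since Gilbarg--Trudinger Thm.~8.19 only requires measurable, bounded, uniformly elliptic coefficients and a nonpositive zero-order term. The genuinely important step you handle correctly is the uniform ellipticity on $K\subset\subset C$ in the singular range $1<p<2$: since $|\nabla u|+|\nabla v|>0$ on $C$, the singularity $|\eta_t|^{p-2}$ is integrable in $t$ (being $O(|t-t_0|^{p-2})$ with $p>1$), and the estimates $A(x)\ge(p-1)\big(\int_0^1|\eta_t(x)|^{p-2}\,dt\big)I\ge(p-1)M_K^{p-2}I$ (with $M_K=\sup_K(|\nabla u|+|\nabla v|)$) and, from above, $A(x)\le\big(\int_0^1|\eta_t(x)|^{p-2}\,dt\big)I$ with the integral uniformly bounded on $K$, give two-sided ellipticity bounds. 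With that secured, the weak Harnack inequality applied to $w=u-v\ge 0$ solving $-\dvg(A\nabla w)+Lw\ge 0$ on each $K$, together with $w(x_0)=0$ and a chaining argument over $C$, yields $w\equiv 0$ on $C$ as required.
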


Finally, we recall a version of the strong maximum principle
and the
Hopf's lemma for the $p$-Laplacian which is a particular case of a
result
proved in \cite{V}  (see also \cite{PSB} and
\cite{lucio}).

\begin{theorem}[Strong maximum principle
and Hopf's Lemma]  Let $\Omega$ be a domain in $\mathbb{R}^N$
and suppose that $u \in C^1 (\Omega), u \geq 0 $ in $ \Omega$ weakly
solves
\medskip
\begin{equation}\nonumber
- \Delta_p u + c u^q = g \geq 0 \quad {\rm in}\; \Omega
\end{equation}
\noindent  with $1 < p <\infty, \; q \geq p - 1,\; c \geq 0$ and $g
\, \in \,
L_{loc}^\infty (\Omega)$.  If $u \not \equiv 0$, then $u > 0$ in
$\Omega$.  Moreover, for any point $x_0 \in \partial \Omega$ where the
interior sphere condition is satisfied and
 $u \in C^1 (\Omega \cup \{x_0\})$ and $u(x_0) = 0$,  we have that
$\frac{\partial u}{\partial s} > 0$ for any inward directional
derivative (this means that if $y$ approaches $x_0$ in a ball
$B \subset \Omega$ that has $x_0$ on its boundary then
$\lim _{y \to x_0} \frac{u (y) - u (x_0)}{|y - x_0|} > 0.$)

\end{theorem}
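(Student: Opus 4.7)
My plan is to prove both assertions by a single Vazquez--style barrier argument, exploiting that $u \in C^1(\Omega)$. For the strong maximum principle, note that if $u$ vanishes at an interior point $y_0$, then $y_0$ is a local minimum of the $C^1$ nonnegative function $u$, so $\nabla u(y_0) = 0$. Thus proving strict positivity reduces to a Hopf--type statement at an interior touching point, and both conclusions will follow from the same construction of a strictly positive one--sided directional derivative at a point where $u$ vanishes.

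Concretely, assume $u \not\equiv 0$ and suppose $Z = \{u = 0\}$ is nonempty. A standard ball--rolling argument inside $\Omega \setminus Z$ produces a ball $B_R(x_c) \subset \Omega \setminus Z$ with $\overline{B_R(x_c)} \cap Z = \{y_0\}$; in the boundary case, the interior sphere condition supplies the ball directly. In the annulus $A = B_R(x_c) \setminus \overline{B_{R/2}(x_c)}$ I would construct a radial barrier $w(x) = \mu f(|x - x_c|)$ of class $C^1(\overline{A})$, with $f > 0$ on $[R/2, R)$, $f(R) = 0$, $f'(R) < 0$, satisfying
\begin{equation*}
-\Delta_p w + c w^q \,\leq\, 0 \quad \text{in } A.
\end{equation*}
Choosing $\mu > 0$ small gives $w \leq u$ on $\partial A$, and a weak comparison principle for $-\Delta_p(\cdot) + c(\cdot)^q$---which follows from the first inequality in \eqref{eq:inequalities} and the monotonicity of $s \mapsto s^q$ by testing the subtracted inequality with $(w-u)^+ \in W^{1,p}_0(A)$---yields $w \leq u$ in $A$. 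Then along the inward normal at $y_0$,
\begin{equation*}
\liminf_{y \to y_0} \, \frac{u(y) - u(y_0)}{|y - y_0|} \,\geq\, -\mu f'(R) \,>\, 0,
\end{equation*}
which contradicts $\nabla u(y_0) = 0$ in the interior case and is exactly the Hopf inequality in the boundary case.

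The main obstacle is the explicit construction of the radial barrier for the degenerate/singular $p$--Laplacian with lower--order term $c w^q$. For trial profiles like $f(r) = (R^2 - r^2)^\tau$ or $f(r) = e^{-\alpha r^2} - e^{-\alpha R^2}$, one must balance the principal radial contribution $-(|f'|^{p-2} f')' - \tfrac{N-1}{r}|f'|^{p-2} f'$ against $c f^q$ uniformly in $A$ by tuning $\tau$, $\alpha$, and $\mu$. The hypothesis $q \geq p - 1$ is precisely what allows the subordination $c w^q \ll |\nabla w|^{p-1}$ as $r \to R^-$, since $f$ and $|f'|$ vanish to matched orders there; this is the technical heart of the argument, after which the comparison step and the derivative estimate above close the proof.
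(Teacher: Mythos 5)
The paper does not prove this theorem; it is recalled as a particular case of V\'azquez's result \cite{V} (see also \cite{PSB} and \cite{lucio}). Your outline is essentially the V\'azquez barrier-and-comparison argument that underlies that reference: reduce both conclusions to a Hopf-type inequality at a point of $\partial\{u>0\}$ touched from inside by a ball, build a radial $p$-subharmonic barrier $w=\mu f(|x-x_c|)$ in an annulus, invoke the weak comparison principle for $-\Delta_p(\cdot)+c(\cdot)^q$, and read off the one-sided normal derivative. The decomposition of the strong maximum principle into an interior Hopf statement via $\nabla u(y_0)=0$ at a zero of the nonnegative $C^1$ function $u$ is also the standard route, and your weak comparison step (testing the subtracted inequality with $(w-u)^+\in W^{1,p}_0(A)$ and using the first line of \eqref{eq:inequalities} together with monotonicity of $s\mapsto s^q$) is correct.

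Two caveats. First, your heuristic for why $q\geq p-1$ is needed is inaccurate. You write that $f$ and $|f'|$ ``vanish to matched orders'' at $r=R$, but your own requirements $f(R)=0$, $f'(R)<0$ force $|f'|$ to stay bounded away from zero near $r=R$ (as it does for the exponential profile), so the lower-order term $cw^q$ already tends to $0$ there regardless of $q$. The real obstruction is the scaling in the small parameter $\mu$: writing $w=\mu f$, the differential inequality $-\Delta_p w + cw^q\leq 0$ becomes
\begin{equation}\nonumber
-\Delta_p f + c\,\mu^{\,q-(p-1)}\,f^q\;\leq\;0\quad\text{in }A,
\end{equation}
and only when $q\geq p-1$ is the factor $\mu^{\,q-(p-1)}$ bounded (indeed $\leq 1$ for $\mu\leq 1$), so that shrinking $\mu$ to achieve $w\leq u$ on $\partial A$ does not destroy the subsolution property of $w$; for $q<p-1$ the coefficient blows up as $\mu\to 0$. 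This is exactly the power-nonlinearity incarnation of V\'azquez's Osserman/Keller divergence condition $\int_0^1 \bigl(\int_0^s\beta\bigr)^{-1/p}\,ds=+\infty$. Second, the barrier construction itself is only sketched (``I would construct'', ``one must balance \dots by tuning''); since you yourself call it the technical heart, the write-up is really an outline deferring the central computation, which with the exponential profile $f(r)=e^{-\alpha r^2}-e^{-\alpha R^2}$ and $\alpha$ large does close, but should be carried out.
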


\medskip

To state the next results we need some notations. \\
If $\nu$ is a direction in $\mathbb{R}^N$, i.e. $\nu  \in \mathbb{R}^N$ and  $|\nu|=1$, and $\lambda$ is  a real number  we set
\begin{equation}\nonumber
T_\lambda^\nu=\{x\in \mathbb{R}^N:x\cdot\nu=\lambda\}.
\end{equation}
\begin{equation}\nonumber
\Sigma_\lambda^\nu=\{x\in \mathbb{R}^N:x\cdot \nu <\lambda\},
\end{equation}
\begin{equation}\nonumber
x_\lambda^\nu=R_\lambda^\nu(x)=x+2(\lambda -x\cdot\nu)\nu,
\end{equation}
(i.e. $R_\lambda^\nu $  is the reflection trough the hyperplane $T_\lambda^\nu$),
\begin{equation}\nonumber
u_{\lambda }^\nu(x)=u(x_\lambda^\nu) \,.
\end{equation}
We denote by $Z_u$ the critical set of $u$ defined by
$Z_u=\{x\in \R^N: \nabla u(x)=0\} $ and we put
\begin{equation}\nonumber
Z_{\lambda }^\nu=\{x\in \Sigma_\lambda^\nu: \nabla u(x)= \nabla u_{\lambda}^\nu(x)=0\} \subseteq Z_u.
\end{equation}
Finally we define
\begin{equation}\nonumber
\Lambda(\nu)=\{\lambda\in \mathbb{R}\,:\, u\leq u_{\mu}^\nu\,\, \text{in}\,\, \Sigma_\mu^\nu,\,\, \forall \mu\leq\lambda \}.
\end{equation}
 and if $\Lambda (\nu)\neq \emptyset$ we set
\begin{equation}\label{eq:suplambda}
 \lambda _0 (\nu) =\sup \Lambda (\nu) .
\end{equation}
We will refer to $ \Sigma_{\lambda _0}^\nu$ as the maximal cap.
As a first step toward the proof of Theorem \ref{radialtheorem} we state and prove the following partial symmetry result.

\begin{theorem}[Partial symmetry]\label{Partial symmetry}  Let $1<p<2$ with $p^*\geqslant 2$, namely $\frac{2N}{N+2}\leqslant p<2$ and let $u$  be a (positive) solution to \eqref{eq:p} in the sense of Definition~\ref{sol}. \\
Then, for every direction $\nu$ we have that $\Lambda (\nu)\neq \emptyset$,  $ \lambda _0(\nu)$ in \eqref{eq:suplambda} is (well defined and) finite,  and one of the following conclusions  occurs
\begin{itemize}
\item [$(i)$] either $u \equiv u_{\lambda _0(\nu)}^\nu$ in $\Sigma_{ \lambda _0(\nu)}^\nu$;
\item[$(ii)$] or $u$ has a local symmetry region, namely
$u \not \equiv u_{\lambda _0(\nu)}^\nu$ in $\Sigma_{ \lambda _0(\nu)}^\nu$ but
$ u\equiv u_{\lambda _0(\nu)}^\nu\,\, \text{in}\,\, \mathcal{C}^\nu,$
for some connected component $\mathcal{C}^\nu$ of
$ \Sigma_{\lambda _0(\nu)}^\nu \setminus Z_{\lambda _0 (\nu) }^\nu$.
\end{itemize}
In the latter case, for any  connected component $\mathcal{C}^\nu$  of local symmetry, we also have
\begin{equation} \label{gradientediversodazeronelle componenti}
\nabla u(x) \neq 0 \quad \forall \; x \in C^\nu \; {\rm and} \; |\nabla u(x)| =|\nabla u_{\lambda _0(\nu)}^\nu |=0 \quad
\forall
\; x \in \partial C^\nu \setminus T^\nu_{\lambda_0(\nu)} \, .
\end{equation}
Moreover,
\begin{equation}\label{disug.strettaprimadilambda0}
u < u^\nu_\lambda \quad {\rm in} \quad \Sigma^\nu_\lambda \, \setminus \,
Z^\nu_\lambda ,\quad \forall \quad \lambda < \lambda_0(\nu),
\end{equation}
\begin{equation}\label{derivataparzialepositiva}
\frac{\partial u}{\partial \nu} (x) > 0 \quad \forall \quad x \in
\Sigma^\nu_{\lambda_0(\nu)} \, \setminus  \, Z \, .
\end{equation}

\end{theorem}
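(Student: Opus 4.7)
The plan is to carry out the Moving Plane Method of Alexandrov--Serrin adapted to the singular $p$-Laplacian, in the spirit of \cite{lucio,DP,DPR,DR}. In place of a decay assumption at infinity, the integrability $u\in L^{p^*}(\R^N)$ combined with Sobolev's inequality will allow us to start the procedure; the assumption $p^*\geq 2$ enters both to ensure that $f(t)=t^{p^*-1}$ is locally Lipschitz continuous on $[0,\|u\|_\infty]$ (so that the strong comparison principle applies) and to close the energy estimate in the moving plane step.

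First, to show $\Lambda(\nu)\neq\emptyset$, I would test the difference of the equations for $u$ and $u_\lambda^\nu$ by $w_\lambda:=(u-u_\lambda^\nu)^+\chi_{\Sigma_\lambda^\nu}\in\mathcal{D}^{1,p}(\R^N)$. The first inequality in \eqref{eq:inequalities} bounds the left hand side from below by
\[
C_1\int_{\Sigma_\lambda^\nu\cap\{u>u_\lambda^\nu\}}(|\nabla u|+|\nabla u_\lambda^\nu|)^{p-2}\,|\nabla(u-u_\lambda^\nu)|^{2}\,dx,
\]
while the Lipschitz control of $f$ on $[0,\|u\|_\infty]$ bounds the right hand side above by $C\int_{\Sigma_\lambda^\nu\cap\{u>u_\lambda^\nu\}}u^{p^*-2}w_\lambda^{2}\,dx$. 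A H\"older interpolation with exponents $(2/p,2/(2-p))$ on the gradient expression, combined with Sobolev's embedding, converts the left hand side into a multiple of $\|w_\lambda\|_{L^{p^*}}^{2}$, whereas a second H\"older with exponents $p^*/(p^*-2)$ and $p^*/2$ on the right hand side produces the prefactor $\|u\|_{L^{p^*}(\Sigma_\lambda^\nu)}^{p^*-2}$. Since $u\in L^{p^*}(\R^N)$, this prefactor vanishes as $\lambda\to-\infty$, forcing $w_\lambda\equiv 0$. The finiteness of $\lambda_0(\nu)$ is then immediate: if $\lambda_0(\nu)=+\infty$, the inequality $u\leq u_\lambda^\nu$ in $\Sigma_\lambda^\nu$ would give, by change of variables, $\int_{\Sigma_\lambda^\nu}u^{p^*}\leq\int_{\R^N\setminus\Sigma_\lambda^\nu}u^{p^*}$ for every $\lambda$, and sending $\lambda\to+\infty$ would produce $\|u\|_{L^{p^*}}=0$. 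Continuity then yields $u\leq u_{\lambda_0}^\nu$ in $\Sigma_{\lambda_0}^\nu$.

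Next, for the dichotomy at $\lambda_0$, the strong comparison principle applied on each connected component $\mathcal{C}$ of $\Sigma_{\lambda_0}^\nu\setminus Z_{\lambda_0}^\nu$ (where by construction the gradients of $u$ and $u_{\lambda_0}^\nu$ do not simultaneously vanish) yields either $u\equiv u_{\lambda_0}^\nu$ or $u<u_{\lambda_0}^\nu$ on $\mathcal{C}$. If equality holds on some component we are either in case $(i)$ (when equality extends to the whole of $\Sigma_{\lambda_0}^\nu$) or in case $(ii)$. It remains to exclude the configuration in which $u<u_{\lambda_0}^\nu$ strictly on every such component. Here I would invoke the Damascelli--Pacella technique, rerunning the energy estimate of the previous step for $\lambda\in(\lambda_0,\lambda_0+\eta)$ after excising a shrinking neighborhood of $Z_{\lambda_0}^\nu$; since $Z_{\lambda_0}^\nu$ has empty interior (otherwise $u$ would vanish on an open set, contradicting the strict positivity of $u$) and since strict inequality holds on every component, a uniform continuity argument absorbs the additional terms and contradicts the maximality of $\lambda_0(\nu)$.

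Finally, \eqref{disug.strettaprimadilambda0} follows from the strong comparison principle applied at every $\lambda<\lambda_0(\nu)$, since $u\not\equiv u_\lambda^\nu$ by maximality; \eqref{derivataparzialepositiva} follows from \eqref{disug.strettaprimadilambda0} together with a Hopf-type boundary argument on the hyperplanes $T_\lambda^\nu$; and \eqref{gradientediversodazeronelle componenti} is a direct consequence of the definition of $\mathcal{C}^\nu$ as a connected component of $\Sigma_{\lambda_0}^\nu\setminus Z_{\lambda_0}^\nu$. The hardest step is the exclusion inside the trichotomy: unlike in the semilinear case, one cannot rely on uniform ellipticity or on a Kelvin transform to push the plane past $\lambda_0$, and the combination of the degenerate inequalities in \eqref{eq:inequalities} with Sobolev's embedding, operated inside the Damascelli--Pacella truncation, is the technical heart of the proof.
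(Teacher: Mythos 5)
Your overall architecture matches the paper's (moving planes, test with $(u-u_\lambda^\nu)^+$, the weighted H\"older/Sobolev estimate of Damascelli--Pacella, then a compact-excision argument to pass beyond any local symmetry). However, there are three concrete gaps, the first of which is serious.

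\textbf{(1) The endpoint case $p^*=2$ collapses in your Step 1.} Your contraction mechanism is the prefactor $\|u\|_{L^{p^*}(\Sigma_\lambda^\nu)}^{p^*-2}$, which tends to $0$ as $\lambda\to-\infty$. But the hypothesis is $p^*\geq 2$, not $p^*>2$; when $p^*=2$ (i.e.\ $p=\tfrac{2N}{N+2}$) the nonlinearity is linear, $u^{p^*-2}\equiv 1$, and your H\"older step yields prefactor $1$: nothing is small. The paper singles this case out and instead forces smallness of
\[
\Bigl(\int_{\Sigma_\lambda}(|\nabla u|+|\nabla u_\lambda|)^p\,\chi_{\{u\geq u_\lambda\}}\,dx\Bigr)^{\tfrac{2-p}{p}},
\]
which is exactly the factor you absorbed into a constant in your first H\"older step. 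To get this small requires splitting $\{u\geq u_\lambda\}$ according to the size of $u_\lambda$, using $\int_{\{u\leq\delta\}}|\nabla u|^p\to 0$ as $\delta\to 0$ (which in turn uses $u>0$ and $u\in C^{1,\alpha}_{\rm loc}$) and a Chebyshev estimate to control the measure of $\{u\geq\delta\}\cap\Sigma_\lambda$. The same issue recurs in Step 2: there too the exclusion argument needs a separate, qualitatively different estimate when $p^*=2$.

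\textbf{(2) ``Empty interior'' of $Z_{\lambda_0}^\nu$ is not enough.} Excising a neighborhood of the critical set and absorbing the remainder requires that you make $\int_{\Sigma_{\lambda_0}\setminus K}u^{p^*}$ arbitrarily small with $K\subset\Sigma_{\lambda_0}\setminus Z_u$ compact. Since $u$ is bounded away from $0$ on any fixed ball, this forces $\mathcal L(Z_u)=0$: a closed set with empty interior can perfectly well have positive measure, in which case no such $K$ exists. The paper cites the regularity results of \cite{DS1,Sci1,Sci2} precisely to get $\mathcal L(Z_u)=0$. (Also, as written your parenthetical is off: $\nabla u\equiv 0$ on an open set forces $u$ constant, not zero, there; only after plugging into $-\Delta_p u = u^{p^*-1}$ do you conclude $u\equiv 0$. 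But this is repairable and is in any case the wrong target.)

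\textbf{(3) The strict inequality \eqref{disug.strettaprimadilambda0} does not follow from a bare strong comparison principle.} For $\lambda<\lambda_0(\nu)$ you claim strictness ``since $u\not\equiv u_\lambda^\nu$ by maximality.'' But the strong comparison principle is applied component-by-component in $\Sigma_\lambda^\nu\setminus Z_\lambda^\nu$; $u\not\equiv u_\lambda^\nu$ globally does not preclude $u\equiv u_\lambda^\nu$ on some single component for $\lambda<\lambda_0(\nu)$. The paper rules this out by an explicit argument: if equality held at $x_0\in\Sigma_\mu^\nu\setminus Z_\mu^\nu$ for some $\mu<\lambda_0$, then combining the SCP with the monotonicity of $u$ in the cap forces $u$ to be constant along the ray from $x_0$ in the $-\nu$ direction, which contradicts $u\in L^{p^*}(\R^N)$. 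This additional step is needed.

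Minor remarks: your finiteness argument for $\lambda_0(\nu)$ (via $\int_{\Sigma_\lambda}u^{p^*}\leq\int_{\R^N\setminus\Sigma_\lambda}u^{p^*}$) is fine and in fact a bit cleaner than the paper's one-line reason; and your outline of the Hopf step for \eqref{derivataparzialepositiva} is correct in spirit, though one must linearize only in neighborhoods of noncritical points where the operator is uniformly elliptic, as the paper does.
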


\begin{proof}
In what follows, without loss of generality  (being the $-\Delta_p(\cdot)$ operator invariant with respect  to rotations),  we will suppose, for simplicity of notations, that  $\nu$ coincides with  the $e_1=(1,0,\cdots,0)$ direction and  we omit the superscript in previous definitions, so that
$ \Sigma_{\lambda}=\Big\{x\in \mathbb{R}^N : x_1< \lambda\Big\}$, $ T_{\lambda}=\Big\{x\in \mathbb{R}^N : x_1= \lambda\Big\}$,
$x_\lambda=R_{\lambda }(x)=(2\lambda-x_1,x_2,\cdots,x_N)$,
$u_{\lambda}(x)=u(x_\lambda)$, $ Z_{\lambda }=\{x\in\Sigma_\lambda: \nabla u(x)= \nabla u_{\lambda}(x)=0\} \subseteq Z_u$,
 $
\Lambda :=\Lambda (e_1) =\{\lambda\in \mathbb{R}\,:\, u\leq u_{\mu}\,\, \text{in}\,\, \Sigma_\mu,\,\, \forall \mu\leq\lambda \}$. \par
To prove the proposition, we shall use the Moving Plane Method  in the $e_1$-direction.\\

\noindent \textbf{STEP 1}
{ \it Here we show that $\Lambda \neq \emptyset $, so that we can define
$
 \lambda _0 =\sup\Lambda
$, and that $
 \lambda _0 $ is finite.} \\

 \noindent We need to consider two different cases: the case $p^*>2$ (namely $p>\frac{2N}{N+2}$), and the case $p^*=2$ (namely $p=\frac{2N}{N+2}$).
\noindent \emph{Let us first deal with the case} $p^*>2$.\\
 \noindent By the summability assumptions on the solution it is
possible to take the function $ (u-v)^+$, $v=u_{\lambda}$, as a
test function in the equations for $u$ and in the equation for $v=u_{\lambda} $ in $\Sigma _{\lambda}$.
More precisely, since $u$,$v$ belong to
$ \mathcal{D}^{1,p}(\rn) $ and they coincide on the hyperplane $T_{\lambda}$, there exists a sequence
 $ \varphi _j $
of functions in $C_c^{\infty}(\Sigma _{\lambda})$ such that
$\varphi _j \to (v-u)^+$ in $L^{p^*}(\Sigma _{\lambda})$
 and $\nabla  \varphi _j \to \nabla (v-u)^+  $
in $L^p (\Sigma _{\lambda})$. Moreover, passing to a subsequence and substituting if
necessary $ \varphi_j$ with $  \varphi_j ^+$, we can suppose that
$ 0 \leq  \varphi _j \to (u-v)^+$, $\nabla  \varphi _j \to \nabla (u-v)^+  $
a.e. in $\Sigma _{\lambda}$, and that there exist  functions $ \psi _0 \in L^{p^\ast }$,
$  \psi _1 \in L^p$, such that
$ | \varphi _j | \leq \psi _0 $, $ |\nabla  \varphi _j| \leq \psi _1 $ a.e. in $\Sigma _{\lambda}$. \par
Taking the functions $\varphi _j $
  as test functions in the equation for   $u$  we get
$$
\int_{\Sigma_\lambda}\langle |\nabla u|^{p-2}\nabla u,\nabla \varphi _j \rangle dx=
\int_{\Sigma_\lambda} u^{p^*-1}  \, \varphi _j dx
$$
If we can pass to the limit for $j \to \infty$,   we obtain

\begin{equation}\label{eq:secondaaaa}
 \int_{\Sigma_\lambda}|\nabla u|^{p-2}\langle \nabla u,\nabla (u-u_{\lambda})^+\rangle dx=\int_{\Sigma_\lambda}u^{p^*-1}(u-u_{\lambda})^+ dx
\end{equation}

So it is enough to justify the passage to the limit, which follows
easily from the
 dominated convergence theorem.
In fact, we have that
$ | u^{p^*-1} \,  \varphi _j | \leq
u^{p^*-1} \psi _0 \in L^1(\Sigma_\lambda)
 $
 since
$ \psi _0 \in  L^{p^*}(\Sigma_\lambda)$,  \, $ u^{p^*-1}   \in L^{\frac {p^*}{p^*-1}}(\Sigma_\lambda) $.
Analogously we have that
$ | |\nabla u|^{p-2}\langle \nabla u,\nabla \varphi  \rangle | \leq
 |\nabla u|^{p-1}  \psi _1 \in L^1(\Sigma_\lambda)
$
 since
$ \psi _1 \in  L^{p}(\Sigma_\lambda)$,  \, $ |\nabla u|^{p-1}   \in L^{\frac {p}{p-1}}(\Sigma_\lambda) $.

Since  $u_{\lambda}$ satisfies the same equation, analogously we get
\begin{equation}\label{eq:secondaaaaA}
\int_{\Sigma_\lambda}|\nabla u_{\lambda}|^{p-2}\langle \nabla u_{\lambda},\nabla (u-u_{\lambda})^+\rangle dx=\int_{\Sigma_\lambda}{u_{\lambda}}^{p^*-1}(u-u_{\lambda})^+dx
\end{equation}
Subtracting equation \eqref{eq:secondaaaaA} from \eqref{eq:secondaaaa} we get
\begin{equation}\nonumber
\int_{\Sigma_\lambda}\langle |\nabla u|^{p-2}\nabla u- |\nabla u_{\lambda}|^{p-2}\nabla u_{\lambda},\nabla (u-u_{\lambda})^+\rangle dx=\int_{\Sigma_\lambda}(u^{p^*-1}-{u_{\lambda}}^{p^*-1}) (u-u_{\lambda})^+ dx
\end{equation}
and, by \eqref{eq:inequalities}, we have
\begin{equation}\label{eq:madriddddd}
\int_{\Sigma_\lambda}(|\nabla u|+|\nabla u_{\lambda}|)^{p-2}|\nabla(u-u_{\lambda})^+|^2dx\leq \frac{1}{C_1}\int_{\Sigma_\lambda}(u^{p^*-1}-u_{\lambda}^{p^*-1}) (u-u_{\lambda})^+ dx\,.
\end{equation}

Since we are assuming now $p^*>2$,  using Lagrange Theorem,  H\"older's and Sobolev's inequalities on the right hand side of \eqref{eq:madriddddd}, we obtain
\begin{eqnarray}\label{eq:batiado}\\\nonumber
&&\int_{\Sigma_\lambda}(u^{p^*-1}-u_{\lambda}^{p^*-1}) (u-u_{\lambda})^+ dx\leq (p^*-1)\int_{\Sigma_\lambda}u^{p^*-2} [(u-u_{\lambda})^+]^2 dx
 \\\nonumber &\leq&(p^*-1)\left(\int_{\Sigma_\lambda}u^{p^*}dx\right)^\frac{p^*-2}{p^*}\left(\int_{\Sigma_\lambda}[(u-u_{\lambda})^+]^{p^*}dx\right)^\frac{2}{p^*}\\\nonumber
&\leq& \frac{(p^*-1)}{S^{2/p}}\left(\int_{\Sigma_\lambda}u^{p^*} dx\right)^\frac{p^*-2}{p^*}\left(\int_{\Sigma_\lambda}|\nabla (u-u_{\lambda})^+|^pdx\right)^\frac{2}{p},
\end{eqnarray}
being  $S$ the Sobolev's constant (see\eqref{SSS}).
Let us note now that by H\"older's  inequality we have
\begin{eqnarray}\label{eq:cazzominchia}\\\nonumber
&&\int_{\Sigma_\lambda}|\nabla (u-u_{\lambda})^+|^pdx=\int_{\Sigma_\lambda}
 \left [ (|\nabla u|+|\nabla u_{\lambda}|)^\frac{p(2-p)}{2} \right ]  \left [(|\nabla u|+|\nabla u_{\lambda}|)^\frac{p(p-2)}{2}    |\nabla (u-u_{\lambda})^+|^p \right ] dx\\\nonumber
&\leq&
\left(\int_{\Sigma_\lambda}(|\nabla u|+|\nabla u_{\lambda}|)^pdx\right)^{\frac{2-p}{2}}\left(\int_{\Sigma_\lambda}{(|\nabla u|+|\nabla u_{\lambda}|)^{p-2}}|\nabla (u-u_{\lambda})^+|^2dx\right)^{\frac p2}\\\nonumber
&\leq&C_2\left(\int_{\Sigma_\lambda}{(|\nabla u|+|\nabla u_{\lambda}|)^{p-2}}|\nabla (u-u_{\lambda})^+|^2dx\right)^{\frac p2},
\end{eqnarray}
where we used that $\nabla u$ and $\nabla u_{\lambda}$ are both in $L^p(\mathbb{R}^N)$.\\

Let us remark, for later use,  that here we absorbed the term
$ \left(\int_{\Sigma_\lambda}(|\nabla u|+|\nabla u_{\lambda}|)^p dx \right)^{\frac{2-p}{2}} $ in the constant $C_2$, but for the limit case we will need to estimate this term. \\

Combining equations \eqref{eq:madriddddd}, \eqref{eq:batiado} and  \eqref{eq:cazzominchia} it follows that
\begin{align}\label{eq:tradddddimm}
&\int_{\Sigma_\lambda}{(|\nabla u|+|\nabla u_{\lambda}|)^{p-2}}|\nabla (u-u_{\lambda})^+|^2dx\\
\nonumber &\leq \frac{C_2^{\frac{2}{p}}(p^*-1)}{C_1\,S^{\frac{2}{p}}}\left(\int_{\Sigma_\lambda}u^{p^*} dx\right)^\frac{p^*-2}{p^*}\int_{\Sigma_\lambda}{(|\nabla u|+|\nabla u_{\lambda}|)^{p-2}}|\nabla (u-u_{\lambda})^+|^2dx.
\end{align}
Since $u\in L^{p^*}(\mathbb{R}^N)$ there exists $R_0 >0 $ such that
\begin{equation} \label{infcomunealledirezioni}
 \frac{C_2^{\frac{2}{p}}(p^*-1)}{C_1\,S^{\frac{2}{p}}}\left( \int_{\mathbb{R}^N\setminus B(0,R_0)}u^{p^*}dx \right)^\frac{p^*-2}{p^*}<\frac 12
 \end{equation}
  so that if $\lambda < - R_0 $ then
$\Sigma_\lambda \subset \mathbb{R}^N\setminus B(0,R_0) $ and
 we  deduce from \eqref{eq:tradddddimm} and \eqref{infcomunealledirezioni}  that $(u-u_{\lambda })^+\equiv0$. \\
So we proved that  $\Lambda \neq \emptyset $, in fact $(- \infty, - R_0) \subset \Lambda $.\par
We remark that for simplicity of notations we assumed that we are dealing with the $x_1$-direction, but $R_0$ is independent of the direction we are a looking at, namely $(- \infty, - R_0) \subset \Lambda (\nu )$ for every direction $\nu \in S^{N-1}$.\\


\noindent \emph{Let us now deal with the limit case} $p^*=2$ (\emph{namely} $p=\frac{2N}{N+2}$).
We can repeat verbatim the previous argument that we used to prove \eqref{eq:tradddddimm} and since $p^* -1 =1$ we get
\begin{align}\nonumber
&\int_{\Sigma_\lambda}{(|\nabla u|+|\nabla u_{\lambda}|)^{p-2}}|\nabla (u-u_{\lambda})^+|^2dx\\
\nonumber &\leq \frac{1}{C_1\,S^{\frac{2}{p}}}
\left(\int_{\Sigma_\lambda} (|\nabla u|+|\nabla u_{\lambda}|)^{p}\,\chi_{\{u\geqslant u_\lambda\}}\,dx\right)^{\frac{2-p}{p}}
\int_{\Sigma_\lambda}{(|\nabla u|+|\nabla u_{\lambda}|)^{p-2}}|\nabla (u-u_{\lambda})^+|^2dx
\end{align}
where we emphasized the factor  $\chi_{\{u\geqslant u_\lambda\}}$ (where $\chi $ is the characteristic function of a set),  and the term
$ \left(\int_{\Sigma_\lambda}(|\nabla u|+|\nabla u_{\lambda}|)^p dx \right)^{\frac{2-p}{p}} $ will be estimated, and not absorbed in the constant
$C_2^{\frac{2}{p}}$. \\
To prove that  $(u-u_{\lambda })^+\equiv0$ for $\lambda$ negative with $|\lambda|$ large, it is sufficient to show that
\begin{equation}\label{claimcasolimite}
\frac{1}{C_1\,S^{\frac{2}{p}}}
\left(\int_{\Sigma_\lambda} (|\nabla u|+|\nabla u_{\lambda}|)^{p}\,\chi_{\{u\geqslant u_\lambda\}}\,dx\right)^{\frac{2-p}{p}}<1\,.
\end{equation}
By simple computations we see that for some  $c_p>0$ depending only on $p$
\begin{equation}\nonumber
\begin{split}
&\frac{1}{C_1\,S^{\frac{2}{p}}}
\left(\int_{\Sigma_\lambda} (|\nabla u|+|\nabla u_{\lambda}|)^{p}\,\chi_{\{u\geqslant u_\lambda\}}\,dx\right)^{\frac{2-p}{p}}\\
&\leqslant \frac{c_p}{C_1\,S^{\frac{2}{p}}}
\left[
\left(\int_{\Sigma_\lambda} |\nabla u|^{p}\,\chi_{\{u\geqslant u_\lambda\}}\,dx\right)^{\frac{2-p}{p}}
+ \left(\int_{\Sigma_\lambda} |\nabla u_\lambda|^{p}\,\chi_{\{u\geqslant u_\lambda\}}\,dx\right)^{\frac{2-p}{p}}
\right]\,.
\end{split}
\end{equation}
Therefore, to prove \eqref{claimcasolimite}, it is sufficient to show  that, for $\lambda$ negative and $|\lambda|$ large both the terms in the last line are arbitrarily small, e.g.
\begin{equation}\nonumber
\int_{\Sigma_\lambda} |\nabla u|^{p}\,\chi_{\{u\geqslant u_\lambda\}}\,dx  \quad  , \quad
\int_{\Sigma_\lambda} |\nabla u_\lambda|^{p}\,\chi_{\{u\geqslant u_\lambda\}}\,dx \;
\leq \, \left(\frac{C_1\,S^{\frac{2}{p}}}{4c_p}\right)^{\frac{p}{2-p}}\,=:\,\hat\tau(p)\,.
\end{equation}

Since $u\in \mathcal {D}^{1,p}(\R^N)$, then there exists $R_0 ' > 0 $  such that
\begin{equation} \nonumber \int_{\R^N \setminus B(0, R_0')} |\nabla u|^{p}\, dx  < \hat\tau(p)\,.
\end{equation}
So,  if $\lambda < - R_0 '$ we have that
$\Sigma_\lambda \subset  \R^N \setminus B(0, R_0') $ and
$ \int_{\Sigma_\lambda} |\nabla u|^{p}\,\chi_{\{u\geqslant u_\lambda\}}\,dx  < \hat\tau(p)$. \\

To prove \eqref{claimcasolimite}, it is now sufficient to show  that, for $\lambda$ negative and $|\lambda|$ large, we have also for the other term
\begin{equation}\nonumber
\int_{\Sigma_\lambda} |\nabla u_\lambda|^{p}\,\chi_{\{u\geqslant u_\lambda\}}\,dx
<\left(\frac{C_1\,S^{\frac{2}{p}}}{4c_p}\right)^{\frac{p}{2-p}}\,=:\,\hat\tau(p)\,.
\end{equation}
Since $u \in \mathcal {D}^{1,p}(\R^N) \cap C^{1,\alpha}_{loc}(\mathbb{R}^N)$ and $u >0$ in $\R^N$,  we have that
$ \lim _{\delta \to 0} |\nabla u|^{p}\,\chi_{\{u\leqslant \delta\}} =0 $, so that by dominated convergence
there exists $\hat\delta>0$ small, depending only on $u$ and not depending on $\lambda$, such that
\begin{equation}\label{jkgkhgkhgdkgdkvcvcvc}
\int_{\R^N} |\nabla u_\lambda|^{p}\,\chi_{\{u_\lambda\leqslant \hat\delta\}}\,dx
= \int_{\R^N} |\nabla u|^{p}\,\chi_{\{u\leqslant \hat\delta\}}\,dx
\leqslant\frac{\hat\tau(p)}{4}\,.
\end{equation}
Furthermore, since $u_\lambda\in \mathcal {D}^{1,p}(\R^N)$, there exists $\bar\delta$, depending only on $u$ and not depending on $\lambda$, such that, since
$\mathcal L(\mathcal E)= \mathcal L(\mathcal R_\lambda (\mathcal E))$ (here $\mathcal L(\cdot)$ stands for the Lebesgue measure) we have that
  \begin{equation}\label{misuraE}
  \text { if } \mathcal L(\mathcal E)  \leqslant \bar \delta \; \text{ then, } \;
\int_{\mathcal E} |\nabla u_\lambda|^{p}\,dx\,=\,\int_{R_\lambda (\mathcal E)} |\nabla u|^{p}\,dx
\leqslant \frac{\hat\tau(p)}{4} \; . \quad
\end{equation}

Note now that, for $\hat\delta$ fixed as in \eqref{jkgkhgkhgdkgdkvcvcvc} and $\bar\delta$ as in \eqref{misuraE}, there exists $R_0 '' > 0 $  such that
\begin{equation} \nonumber
\int_{\R^N \setminus B(0, R_0'')}\,u^{p^*}\,dx\leqslant \bar\delta\, \hat\delta^{p^*}
\end{equation}
So,  if $\lambda < - R_0 ''$ we have that
$\Sigma_\lambda \subset  \R^N \setminus B(0, R_0'') $ and
 \begin{equation}\nonumber
\mathcal{L} \left(\{u\geqslant u_\lambda\}\cap\{u_\lambda \geqslant \hat\delta\}\cap\Sigma_\lambda\right)
\leqslant \mathcal{L} \left(\{u \geqslant \hat\delta\}\cap\Sigma_\lambda\right)\leqslant \frac{1}{\hat\delta^{p^*}}
\int_{\Sigma_\lambda}\,u^{p^*}\,dx\leqslant \bar\delta\,.
\end{equation}
Using \eqref{misuraE} we have \ \
$ \int_{\Sigma_\lambda} |\nabla u_\lambda|^{p}\,\chi_{\{u\geqslant u_\lambda\}}\chi_{\{ u_\lambda\geqslant \hat\delta\}}\,dx
\leq \frac{\hat\tau(p)}{4} $ \ \
and exploiting also \eqref{jkgkhgkhgdkgdkvcvcvc}
\begin{equation}\nonumber
\begin{split}
&\int_{\Sigma_\lambda} |\nabla u_\lambda|^{p}\,\chi_{\{u\geqslant u_\lambda\}}\,dx\\
&=\int_{\Sigma_\lambda} |\nabla u_\lambda|^{p}\,\chi_{\{u\geqslant u_\lambda\}}\chi_{\{ u_\lambda\geqslant \hat\delta\}}\,dx
+\int_{\Sigma_\lambda} |\nabla u_\lambda|^{p}\,\chi_{\{u\geqslant u_\lambda\}}\,\chi_{\{ u_\lambda< \hat\delta\}}\,dx\\
&\leq \frac{\hat\tau(p)}{2}\,,
\end{split}
\end{equation}
and \eqref{claimcasolimite} is proved, so that  also in this case we have $(u-u_{\lambda })^+\equiv0$ for $\lambda$ negative with $|\lambda|$ large.\\

 It is easy to see  that also in the limit case the values that we have to consider for $\lambda$ do not depend on the direction that we consider.
 In fact  $R_0'  $,  and $\hat\delta$ and $\bar\delta$,  which next determine  $  R_0 ''$, depend only on $u$, and do not depend on the direction under consideration, and   taking $R_0 = \max (R_0' , R_0 '')  $, we have that   $(- \infty, - R_0) \subset \Lambda (\nu )$ for every direction $\nu \in S^{N-1}$.\\

Having  proved that  $\Lambda \neq \emptyset $  we can define
$
 \lambda _0 =\sup\Lambda
$ and it easy to see that
$$ \lambda _0 <+\infty
$$
 since $u$ is positive and  $u\in L^{p^*}(\mathbb{R}^N)$: if we had   $ \lambda _0 =+\infty $ then, $u$ would be  positive and nondecreasing in the $x_1$-direction in $\R^N$.\\

\noindent \textbf {STEP 2}
{ \it Here we show that in the maximal cap $ \Sigma_{\lambda _0}$ either i) or ii) occur.} \\

\noindent \emph{Let us first deal with the case} $p^*>2$.
Let  $\mathcal{C}$ be a connected component of $ \Sigma_{\lambda _0}\setminus Z_{\lambda _0}$.
By the Strong Comparison Principle it follows that $u<u_{\lambda}$ or $u\equiv u_{\lambda}$ in all $\mathcal{C}$.

If $u \equiv u_{\lambda _0} $ in $\Sigma_{ \lambda _0}$ then  $(i)$ holds.  \\
If instead
 $u \not \equiv u_{\lambda _0}$ in $\Sigma_{ \lambda _0}$ but  $u  \equiv u_{\lambda _0}$  in at least one connected component $\mathcal{C}$ then $(ii)$ holds. \par

Therefore, we have to show that it is not possible that $u< u_{\lambda _0}$ in  \emph{every} connected component $\mathcal{C}$ of $ \Sigma_{\lambda _0}\setminus Z_{\lambda _0}$.\par

Suppose by contradiction that $u<u_{\lambda _0} $ in $ \Sigma_{\lambda _0}\setminus Z_{\lambda _0}$ and let
 $\eta>0$ be arbitrarily small (to be fixed later). \\
 By the results in \cite{DS1}, \cite{Sci1} and \cite{Sci2}, we know that    $\mathcal L(Z_u)=0,$ where $\mathcal L(A)$ is the Lebesgue measure of a set $A$. \\
This and the fact that   $u\in L^{p^*}(\mathbb{R}^N)$ allow us to fix a compact
 $K=K _{\eta } \subset \Sigma_{\lambda _0} \setminus Z_u \subset  \Sigma_{\lambda _0} \setminus Z_{\lambda _0} $ such that
\begin{equation}\label{compattolambda0}
\int_{  \Sigma_{\lambda _0}   \setminus K  }u^{p^*}dx=\int_{ ( \Sigma_{\lambda _0} \setminus Z_u )   \setminus K  }u^{p^*}dx\leq \frac {\eta }2\,.
  \end{equation}
  By the continuity of the integrals with respect to $\lambda $,
      we can fix  $\bar{\varepsilon} >0 $,  such that   we have for $K = K_{\eta}$
\begin{align}\label{bolita}
K  \subset \Sigma_{\lambda } \setminus Z_u \, , \quad
 \int_{\Sigma_{\lambda }  \setminus K  }u^{p^*}dx\leq  \eta \quad \quad  \forall \lambda  \in ( \lambda _0 -\bar  \varepsilon , \lambda _0 +\bar{\varepsilon})
\end{align}
and,   since  $u_{{\lambda _0}}-u \geq \alpha >0 $ in $K=K_{\eta }$, by continuity we have that $u_{{\lambda }}-u$ is positive in $K$  if $\lambda $ is close to
$\lambda  _0$, so that, taking $\bar \varepsilon $ smaller if necessary we can suppose that
\begin{equation}\label{bolita2} u<u_{{\lambda }}\quad\hbox{in}\quad K \quad \quad  \forall \lambda  \in ( \lambda _0 -\bar  \varepsilon , \lambda _0 +\bar{\varepsilon})\,.
\end{equation}
For such values of $\lambda $ we consider  $ (u-u_{\lambda })^+$ as a test function in   \eqref{eq:p}
 and, since $(u- u_{\lambda})^+ \equiv 0 $ in $K= K _{\eta }$, arguing exactly as in Step 1, we obtain
\begin{align*}
&\int_{\Sigma_{\lambda}\setminus K} (|\nabla u|+|\nabla u_{\lambda}|)^{p-2}|\nabla (u-u_{\lambda })^+|^2dx \\
&\leq C\left(\int_{\Sigma_{\lambda }\setminus K } u^{p^*} dx\right)^\frac{p^*-2}{p^*}\int_{\Sigma_{\lambda }\setminus K }(|\nabla u|+|\nabla u_{\lambda}|)^{p-2}|\nabla (u-u_{\lambda })^+|^2dx.
\end{align*}

 Thanks to  \eqref{bolita},  we can assume that
 $\eta>0$ is sufficiently small so that  $\forall \, \lambda  \in (\lambda _0 -\bar  \varepsilon , \lambda _0 +\bar{\varepsilon}) $
\begin{align*}
C\left(\int_{\Sigma_{\lambda }\setminus K }u^{p^*}dx\right)^{\frac{p*-2}{p*}}<1.
\end{align*}

Therefore, for such values of $\lambda $ we obtain that $u\leq u_{{\lambda }}$ in $\Sigma_{{\lambda }}\setminus K $, and
by \eqref{bolita2} we get actually that
$u\leq u_{\lambda }$ in $\Sigma_{\lambda}$ for any  $ \lambda  \in (\lambda _0 -\bar  \varepsilon , \lambda _0 +\bar{\varepsilon})  $. In particular, this holds for $\lambda > \lambda _0 $ and close to $\lambda _0$, and
this is a contradiction with the definition of $\lambda _0$. So we can conclude that
$u\equiv u_{{\lambda _0}}$ in at least one connected component $\mathcal{C}$ of $ \Sigma_{\lambda _0}\setminus Z_{\lambda _0}$.\par
Moreover, by symmetry $|\nabla u(x)|= |\nabla u_{\lambda _0}(x)|$ if $x \in \mathcal{C} $ so that by definition \eqref{gradientediversodazeronelle componenti} follows.\\

\noindent  \emph{Let us now deal with the case} $p^*=2$ (\emph{namely} $p=\frac{2N}{N+2}$). \\
The proof is entirely analogous to the case $p^*>2$. As in Step 1, the only change is the estimation of the term
$ \int_{  \Sigma_{\lambda }   \setminus K  } (|\nabla u|+|\nabla u_{\lambda }|)^{p}\,\chi_{\{u\geqslant u_{\lambda } \}}\,dx $ instead of the term
$ \int_{  \Sigma_{\lambda }   \setminus K  }u^{p^*}dx $, but it is not  needed  any fine estimate (as was the case in Step 1 for the limit case).\\
 More precisely,
 supposing by contradiction that $u<u_{\lambda _0} $ in $ \Sigma_{\lambda _0}\setminus Z_{\lambda _0}$ and
    given $\eta >0 $ arbitrarily small, since $(|\nabla u|+|\nabla u_{\lambda _0}|)^p  \in L^1 (\Sigma _{\lambda _0})$,
   we can choose a compact set
$K=K _{\eta } \subset \Sigma_{\lambda _0} \setminus Z_u \subset  \Sigma_{\lambda _0} \setminus Z_{\lambda _0} $ such that
\begin{equation}\nonumber
\int_{  \Sigma_{\lambda _0}   \setminus K  } (|\nabla u|+|\nabla u_{\lambda _0}|)^{p}\, dx =\int_{ ( \Sigma_{\lambda _0} \setminus Z_u )   \setminus K  }  (|\nabla u|+|\nabla u_{\lambda _0}|)^{p}\, dx  \leq \frac {\eta }2\,
  \end{equation}
  and $u_{{\lambda _0}}-u \geq \alpha >0 $ in $K=K_{\eta }$.
 By  continuity   with respect to $\lambda $   there exists $\bar \varepsilon  >0 $ such that
 \begin{align}\nonumber   \forall \lambda \in (\lambda _0 - \bar \varepsilon  , \lambda _0 + \bar \varepsilon  ) \, :  \quad
  K  \subset \Sigma_{\lambda }     \quad ,  \quad \quad
 \int_{  \Sigma_{\lambda }     \setminus K  } (|\nabla u|+|\nabla u_{\lambda}|)^{p}\,\chi_{\{u\geqslant u_{\lambda } \}}\,dx
 \leq \eta
\end{align}
\begin{equation}\nonumber u<u_{{\lambda }}\quad\hbox{in}\quad K \quad \quad  \forall \lambda  \in ( \lambda _0 -\bar  \varepsilon , \lambda _0 +\bar{\varepsilon})\,.
\end{equation}
Then,     considering for the previous values of $\lambda $  the function
$ (u-u_{\lambda })^+$ as a test function in   \eqref{eq:p},
 since $(u- u_{\lambda})^+ \equiv 0 $ in $K= K _{\eta }$, arguing exactly as in Step 1, we obtain
\begin{align*}
&\int_{\Sigma_{\lambda}\setminus K} (|\nabla u|+|\nabla u_{\lambda}|)^{p-2}|\nabla (u-u_{\lambda })^+|^2dx \\
&\leq C  \left( \int_{\Sigma_{\lambda}\setminus K} (|\nabla u|+|\nabla u_{\lambda}|)^{p}\,\chi_{\{u\geqslant u_\lambda\}}\,dx \right)^{\frac{2-p}{p}}
\int_{\Sigma_{\lambda }\setminus K }(|\nabla u|+|\nabla u_{\lambda}|)^{p-2}|\nabla (u-u_{\lambda })^+|^2dx.
\end{align*}
For a  suitable choice of $\eta $ we have that
$C  \left( \int_{\Sigma_{\lambda}\setminus K} (|\nabla u|+|\nabla u_{\lambda}|)^{p}\,\chi_{\{u\geqslant u_\lambda\}}\,dx \right)^{\frac{2-p}{p}}  <1$ and we conclude as in the previous case.\\

\noindent \textbf {STEP 3}
{ \it Here we prove \eqref{disug.strettaprimadilambda0} and \eqref{derivataparzialepositiva} }. \\

 Let us remark that by the definition of $\lambda _0 $ the function $u$ is monotone nondecreasing in the $x_1$ variable in the cap $\Sigma_{\lambda _0}$.\\
Let us suppose by contradiction that for some $\mu < \lambda _0$, $y_0 < \mu$ and $x_0=(y_0,z_0) \in  \Sigma_{\mu}\setminus Z_{\mu} $ we have that $u(x_0)= u_{\mu}(x_0)$. Then, by the Strong Comparison Principle   it follows that  $u\equiv u_{\mu}$ in the connected component  $\mathcal{C}$ of $ \Sigma_{\mu}\setminus Z_{\mu}$ to which $x_0$ belongs, so that $|\nabla u (x_0)| =|\nabla u_{\mu} (x_0)| \neq 0  $. If $\mu < \lambda \leq \lambda _0 $ and $R_{\mu} ( R_{\lambda}(x_0)) \in \mathcal {C}$  we  have $u(x_0) \leq u \left ( R_{\lambda}(x_0) \right )=
u \left ( R_{\mu} ( R_{\lambda}(x_0)) \right ) $, so that for $y<y_0$, with $y_0-y$ small we have $u(y_0,z_0) \leq  u(y,z_0) $ and by the monotonicity of $u$ in the cap  $\Sigma_{\lambda _0}$ we  obtain that $u(y_0,z_0) =  u(y,z_0) $ for $y<y_0$, with $y_0-y$ small. So for $y$ belonging to a nonvoid maximal interval $(a, y_0)$ we have that $u(y_0,z_0) =  u(y,z_0) $, and it follows easily that  $a = - \infty $:
 if $a > - \infty $ repeating the previous argument with $\mu$ substituted by $\frac {a+y_0}2 $ we obtain that $u(y_0,z_0) =  u(y,z_0) $   if $y<a$ is close to $a$. \par
 So we proved that if  for some $\mu < \lambda _0$, $y_0 < \mu$ and $x_0=(y_0,z_0) \in  \Sigma_{\mu}\setminus Z_{\mu} $ we have that $u(x_0)= u_{\mu}(x_0)$  then $u(y,z_0) \equiv c $  for $ y \in (- \infty , y_0) $. \\
 The same argument can be repeated verbatim for all points close to the starting point $(y_0,z_0)$, and this easily allows to conclude  that $u$ would
 be strictly bounded away from zero in a strip $(-\infty,b)\times B_{\R^{N-1}}(z_0,r)$,
contradicting the fact that
  $u \in L^{p^*}(\R^N)$, so that \eqref{disug.strettaprimadilambda0} follows. \par
Finally, \eqref{derivataparzialepositiva}   follows from the previous inequality  and the usual Hopf's Lemma for strictly elliptic operators, since the difference $u- u_{\lambda}$ satisfies a uniformly elliptic equations in a neighborhood of any noncritical point.
In fact, let $x=(\lambda ,z) \in
\Sigma _{\lambda _0} \setminus Z $, i.e. $\lambda <\lambda _0 $
and $\nabla u(x)\neq 0 $. In a ball $B=B_r(x)$ we have that $|\nabla u|\geq \epsilon
>0 $, so that $|\nabla u|, |\nabla u_{\lambda}|\geq \epsilon >0 $ in
$B\cap \Sigma _{\lambda} $.
This implies by standard results that $u\in C^2 (B)$ and that
the difference $u_{\lambda }-u $ satisfies a linear strictly elliptic
equation $L(u_{\lambda }-u) =0 $.
On the other hand we have, by  \eqref{disug.strettaprimadilambda0} that
$u_{\lambda }-u >0 $ in $B\cap \Sigma _{\lambda} $ while
$u(x)=u_{\lambda }(x)$ because $x$ belongs to $T_{\lambda } $.
Hence, by the usual Hopf's lemma we get
$ 0> \frac {\partial (u_{\lambda }-u )} {\partial x_1}(x)=
-2\frac {\partial u} {\partial x_1}(x)  $
i.e.  \eqref{derivataparzialepositiva} holds.
\end{proof}

\begin{remark} In the proof we assumed for simplicity of notations that the direction involved was the $x_1$-direction, but some of the results are uniform w.r.t. directions. In particular:
\begin{itemize}
\item[-]  in Step 1 we proved that there exists $R_0 >0 $ such that  $(- \infty, - R_0) \subset \Lambda (\nu )$ for \emph{every} direction $\nu \in S^{N-1}$. \par
\item[-] In Step 2 we showed (for $\nu _0 = e_1$)  that if $u<u_{\lambda _0}^{\nu _0} $ in $ \Sigma _{\lambda _0}^{\nu _0} \setminus Z_{\lambda _0}^{\nu _0}$, then we can obtain the inequality $u \leq u_{\lambda}^{\nu _0}$ in $ \Sigma_{\lambda }^{\nu _0}$ for any $\lambda $ in a neighborhood of $\lambda _0 $.
 Analogously we can move direction close to $\nu _0 $ and the proof is entirely analogous.
 Namely the following holds
 \end{itemize}
\end{remark}
\begin{lemma}\label{muoveredirezionielambda}  Let $ \nu_0 $ be a direction in $\mathbb{R}^N$.
\begin{itemize}
\item[i) ] Let  $\la_0 \in  \Lambda (\nu_0)$ i.e. $\lambda _0 \leq \lambda _0 (\nu _0)$.\par
 If $ u \; < \; u^{\nu_0}_{\la_0} \quad {\rm in} \quad
(\Sigma^{\nu_0}_{\la_0} \setminus Z^{\nu_0}_{\la_0}) \,
$ (this happens in particular if $\lambda _0 < \lambda _0(\nu _0)$ by \eqref{disug.strettaprimadilambda0}),
then, there exists
$\bar  \varepsilon >0  $ such that
 $ \quad u \leq u _{\la}^{\nu } $ in $ \Sigma _{\la}^{\nu } $ \ \
$\forall $
$ \la \in  \left ( \la_0 -\bar  \varepsilon \; , \; \la_0 +\bar  \varepsilon \right ) $, \
$ \forall $ $ \nu \in I_{\bar  \varepsilon}(\nu _0 ) $,
where $I_{\bar  \varepsilon} (\nu_0) := \left\{ \nu \, : \, |\nu| = 1 \, , \,
|\nu - \nu_0| < \bar  \varepsilon \right\}$.
\item[ii) ] Let $\lambda _0  = \lambda _0 (\nu _0)$, and let
$F_{\nu _0} = \{ x \in \Sigma^{\nu_0}_{\la_0} \setminus Z^{\nu_0}_{\la_0} : u(x)= u_{\lambda _0}^{\nu _0}(x) \}$.\par
 There exists $  \bar  \varepsilon >0  $ such that for any $ \la _1 \in  \left ( \la_0 -   \bar  \varepsilon \; , \; \la_0 +   \bar  \varepsilon \right ) $
and for any direction $\nu _1 \in I_{  \bar  \varepsilon} (\nu_0) := \left\{ \nu \, : \, |\nu| = 1 \, , \,
|\nu - \nu_0| <   \bar  \varepsilon \right\}$ the following holds: \\
 if we have \ \
$
u \; < \; u^{\nu_1}_{\la_1} \quad {\rm in} \quad
F_{\nu _0} \cap \Sigma^{\nu_1}_{\la_1}  \,
$ \ \
then there exists in turn
$  \varepsilon  _1 >0$ ($  \varepsilon  _1 <  \bar  \varepsilon - |\lambda _1 - \lambda _0|$, $  \varepsilon  _1 <  \bar  \varepsilon - |\nu _1 - \nu _0|$) such that
 $ \quad u \leq u _{\la}^{\nu } $ in $ \Sigma _{\la}^{\nu } $ \ \ \
$\forall $
$ \la \in  \left ( \la_1 -  \varepsilon _1 \; , \; \la_1 +  \varepsilon _1 \right ) $, \
$ \forall $ $ \nu \in I_{  \varepsilon _1 }(\nu _1 ) $.
\end{itemize}
\end{lemma}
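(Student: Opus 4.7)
The plan is to adapt the compact-set weak-comparison scheme of Step~2 in the proof of Theorem~\ref{Partial symmetry}, now allowing the parameter $\lambda$ and the direction $\nu$ to vary simultaneously. The three ingredients are: joint continuity in $(\lambda,\nu)$ of $u^{\nu}_{\lambda}$ and of $\chi_{\Sigma^{\nu}_{\lambda}}$; the vanishing of the Lebesgue measure of $Z_u$ (see \cite{DS1,Sci1,Sci2}); and inner regularity of the finite Borel measures $u^{p^*}\,dx$ (if $p^*>2$) or $(|\nabla u|+|\nabla u^{\nu_0}_{\lambda_0}|)^p\,dx$ (in the limit case $p^*=2$).

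For part~i), the strict inequality $u<u^{\nu_0}_{\lambda_0}$ on $\Sigma^{\nu_0}_{\lambda_0}\setminus Z^{\nu_0}_{\lambda_0}$ lets us pick a compact $K\subset\Sigma^{\nu_0}_{\lambda_0}\setminus Z_u$ on which $u^{\nu_0}_{\lambda_0}-u\geq\alpha>0$, while the tail integral $\int_{\Sigma^{\nu_0}_{\lambda_0}\setminus K}u^{p^*}\,dx$ (or its gradient analogue in the limit case) is arbitrarily small, exactly as in \eqref{compattolambda0}. By joint continuity there exists $\bar\varepsilon>0$ such that, for every $(\lambda,\nu)$ with $|\lambda-\lambda_0|<\bar\varepsilon$ and $|\nu-\nu_0|<\bar\varepsilon$, one still has $K\subset\Sigma^{\nu}_{\lambda}$, $u<u^{\nu}_{\lambda}$ on $K$, and a small tail integral over $\Sigma^{\nu}_{\lambda}\setminus K$. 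Plugging $(u-u^{\nu}_{\lambda})^+$ as test function (it vanishes on $K$ and on $T^{\nu}_{\lambda}$, so it extends to a function in $\mathcal{D}^{1,p}(\R^N)$) and repeating verbatim the estimate chain \eqref{eq:madriddddd}--\eqref{eq:tradddddimm} of Step~1 forces $(u-u^{\nu}_{\lambda})^+\equiv 0$ in $\Sigma^{\nu}_{\lambda}$, giving the desired $u\leq u^{\nu}_{\lambda}$.

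For part~ii) the obstruction is that on $F_{\nu_0}$ one has equality $u=u^{\nu_0}_{\lambda_0}$, so no uniform gap can be extracted from $u^{\nu_0}_{\lambda_0}-u$ on a compact set meeting $F_{\nu_0}$. The remedy is to decompose
\[
\Sigma^{\nu_0}_{\lambda_0}\setminus Z_u \,=\, \bigl((\Sigma^{\nu_0}_{\lambda_0}\setminus Z_u)\setminus F_{\nu_0}\bigr)\,\cup\,\bigl((\Sigma^{\nu_0}_{\lambda_0}\setminus Z_u)\cap F_{\nu_0}\bigr),
\]
observing that $F_{\nu_0}$ is closed in $\Sigma^{\nu_0}_{\lambda_0}\setminus Z^{\nu_0}_{\lambda_0}$ by continuity of $u-u^{\nu_0}_{\lambda_0}$, and selecting compact subsets $K''$ and $K'$ of the two pieces so that $K:=K'\cup K''$ absorbs all but a small tail of the relevant measure. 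On $K''$, where $u<u^{\nu_0}_{\lambda_0}$ holds strictly, compactness gives a uniform gap that, by continuity, transfers to a uniform gap for $u^{\nu_1}_{\lambda_1}-u$ once $(\lambda_1,\nu_1)$ lies in the initial $\bar\varepsilon$-neighborhood of $(\lambda_0,\nu_0)$. On $K'\subset F_{\nu_0}$, the pointwise hypothesis $u<u^{\nu_1}_{\lambda_1}$ on $F_{\nu_0}\cap\Sigma^{\nu_1}_{\lambda_1}$, combined with continuity and compactness of $K'$, yields a uniform gap $\alpha_1>0$ depending on the specific $(\lambda_1,\nu_1)$. Both uniform gaps survive on $K$ throughout a smaller neighborhood of $(\lambda_1,\nu_1)$ of some radius $\varepsilon_1>0$, and the weak-comparison argument of part~i) then applies on $\Sigma^{\nu}_{\lambda}\setminus K$ to conclude.

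The only genuinely delicate point is the splitting $K=K'\cup K''$ in part~ii): one must check that $F_{\nu_0}$ is closed (which follows from continuity of $u-u^{\nu_0}_{\lambda_0}$) and that the measure whose tail is controlled is inner regular on each of the two pieces, so that $K$ can be chosen to carry almost all the mass. All remaining computations, including the more delicate estimates in the limit case $p^*=2$, are direct transcriptions of those carried out in Steps~1 and~2 of Theorem~\ref{Partial symmetry}.
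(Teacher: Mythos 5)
Your proposal is correct and follows essentially the same route as the paper: part~i) reuses the compact $K$ plus tail estimate from Step~2 of the partial-symmetry proof with joint continuity in $(\lambda,\nu)$, and part~ii) splits the compact set into a piece in $F_{\nu_0}$ (where the gap comes from the hypothesis on $(\lambda_1,\nu_1)$) and a piece in its complement relative to $\Sigma^{\nu_0}_{\lambda_0}\setminus Z_u$ (where a fixed gap $\alpha>0$ is available from the outset), exactly as the paper's decomposition into $K^1\subset G_{\nu_0}$ and $K^2\subset F_{\nu_0}$. The only detail the paper makes explicit that you leave implicit is that, by the strong comparison principle, $\nabla u\neq 0$ on $F_{\nu_0}$, which is what guarantees $F_{\nu_0}\subset\Sigma^{\nu_0}_{\lambda_0}\setminus Z_u$ and makes the decomposition well posed; your observation that $F_{\nu_0}$ is closed is a consequence but you should also note this gradient fact.
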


\begin{proof}  The proof is analogous to that of Step 2 in Theorem \ref{Partial symmetry} and we only sketch it in the case $p^* > 2$ (in the limit case
 $p^* = 2$ the easy changes are as  in the proof of Theorem \ref{Partial symmetry}, Step 2).  \\
i) \  If $\eta >0$, choose a compact
$K=K _{\eta } \subset \Sigma_{\lambda _0}^{\nu_0}  $ such that  \
 $
\int_{  \Sigma_{\lambda _0}^{\nu_0}     \setminus K  }u^{p^*}dx =  $
 $
\int_{ ( \Sigma_{\lambda _0}^{\nu_0} \setminus Z_u )   \setminus K  }u^{p^*}dx\leq \frac {\eta }2 $.
 Then   $ \int_{  \Sigma_{\lambda }^{\nu}     \setminus K  }u^{p^*}dx\leq \eta $ will hold not only  if $\lambda $ is sufficiently close to $\lambda _0 $ but also if $\nu$ is sufficiently close to $\nu _0$
  so that
we can fix  $\bar{\varepsilon} >0 $,  such that    \begin{align}\nonumber
K \subset \Sigma_{\lambda }^{\nu} \, , \quad
 \int_{\Sigma_{\lambda }^{\nu}  \setminus K } u^{p^*}dx\leq  \eta \quad \quad  \forall \lambda  \in ( \lambda _0 -\bar  \varepsilon , \lambda _0 +\bar{\varepsilon}) \quad \forall  \nu \in I_{  \varepsilon  }(\nu _0 )
\end{align}
\begin{equation}\nonumber u<u_{\lambda }^{\nu}\quad\hbox{in}\quad K \quad \quad  \forall \lambda  \in ( \lambda _0 -\bar  \varepsilon , \lambda _0 +\bar{\varepsilon}) \quad \forall  \nu \in I_{  \varepsilon  }(\nu _0 )\,.
\end{equation}
Then we proceed as in the previous theorem.
   \par

ii) Let us remark that if $x \in F_{\nu _0} = \{x \in \Sigma _{\lambda _0}^{\nu _0} \setminus Z_{\lambda _0}^{\nu _0} : u(x) = u_{\lambda _0}^{\nu _0}(x)  \} $, then by the strong comparison principle $u\equiv u _{\lambda _0}^{\nu _0} $ in the connected component of $\Sigma _{\lambda _0}^{\nu _0} \setminus Z_{\lambda _0}^{\nu _0}$ to which $x$ belongs, so that  $|\nabla u(x)| = |\nabla u_{\lambda _0}^{\nu _0} (x)| \neq 0  $.
So \ \    $F_{\nu _0} = \{x \in \Sigma _{\lambda _0}^{\nu _0} \setminus Z_u : u(x) = u_{\lambda _0}^{\nu _0}(x)  \} $ \  and we define \ $G_{\nu _0} = \left ( \Sigma^{\nu_0}_{\la_0} \setminus Z^{u} \right ) \setminus  F_{\nu _0} $,
 the complementary set with respect to $ \Sigma _{\lambda _0}^{\nu _0} \setminus Z_u$:  \
   $G_{\nu _0} = \{ x \in \Sigma^{\nu_0}_{\la_0} :u (x)< u^{\nu_0}_{\la_0}(x) , \nabla u(x) \neq 0 \}$.\\
Given $\eta >0$ we first select two compact set
$K^1= K^1 _{\eta } \subset G_{\nu _0}   $ and $K^2= K^2 _{\eta } \subset F_{\nu _0} $ such that putting $K= K _{\eta} = K^1 \cup K^2 $ then  \eqref{compattolambda0}  holds, namely
$
\int_{  \Sigma_{\lambda _0}^{\nu_0}    \setminus K  }u^{p^*}dx\leq \frac {\eta }2   $ and
$
\int_{ ( \Sigma_{\lambda _0}^{\nu_0} \setminus Z_u )   \setminus K  }u^{p^*}dx\leq \frac {\eta }2 $.\\
Since $ u^{\nu_0}_{\la_0}-u $ is positive only in $G_{\nu _0}$, then there exists $\alpha >0 $ such that
$ u^{\nu_0}_{\la_0}-u \geq \alpha >0$ in $K^1 _{\eta }  $, and
 we can fix  $\bar{\varepsilon} >0 $,  such that
   we have
  \begin{align}\label{vvbolita}
K= K^1 \cup K^2 \subset \Sigma_{\lambda }^{\nu} \, , \quad
 \int_{\Sigma_{\lambda }^{\nu}  \setminus K } u^{p^*}dx\leq  \eta \quad \quad  \forall \lambda  \in ( \lambda _0 -\bar  \varepsilon , \lambda _0 +\bar{\varepsilon}) \quad \forall  \nu \in I_{  \varepsilon  }(\nu _0 )
\end{align}
\begin{equation}\nonumber u<u_{\lambda }^{\nu}\quad\hbox{in}\quad K^1     \quad \quad  \forall \lambda  \in ( \lambda _0 -\bar  \varepsilon , \lambda _0 +\bar{\varepsilon}) \quad \forall  \nu \in I_{  \varepsilon  }(\nu _0 )\,.
\end{equation}
 The difference now is that we know that   $ u^{\nu_0}_{\la_0}-u $ is strictly positive only in $K^1 _{\eta } $ and not in $K= K _{\eta} = K^1 \cup K^2 $ .\par
 Nevertheless, if
 $\nu _1 \in I_{  \bar  \varepsilon} (\nu_0)$ and   $
u \; < \; u^{\nu_1}_{\la_1} \quad {\rm in} \quad
F_{\nu _0} \cap \Sigma^{\nu_1}_{\la_1}  \,
$ \ \
then $ u^{\nu_1}_{\la_1}-u  \geq \beta  $ for some $\beta >0$ in $K= K _{\eta} = K^1 \cup K^2 $.
 So  there exists
$  \varepsilon  _1 >0$ ($  \varepsilon  _1 <  \bar  \varepsilon - |\lambda _1 - \lambda _0|$, $  \varepsilon  _1 <  \bar  \varepsilon - |\nu _1 - \nu _0|$) such that
\begin{equation}\label{vvbolita2} u<u_{\lambda }^{\nu}\quad\hbox{in}\quad K     \quad \quad  \forall \lambda  \in ( \lambda _1 -  \varepsilon _1 , \lambda _1 +\varepsilon _1) \quad \forall  \nu \in I_{  \varepsilon _1 }(\nu _1 )\,.
\end{equation}
  Then, \eqref{vvbolita} and \eqref{vvbolita2} (which are  analogous to \eqref{bolita} and \eqref{bolita2}) hold and   we proceed as in Step 2 of Theorem \ref{Partial symmetry} and deduce that  $ \quad u \leq u _{\la}^{\nu } $ in $ \Sigma _{\la}^{\nu } $ \ \ \
$\forall $
$ \la \in  \left ( \la_1 -  \varepsilon _1 \; , \; \la_1 +  \varepsilon _1 \right ) $, \
$ \forall $ $ \nu \in I_{  \varepsilon _1 }(\nu _1 ) $.
  \end{proof}

\smallskip

Once we have a partial symmetry result, the full symmetry follows by using the technique introduced in \cite{DP} for bounded domains and extended in \cite{DPR} (see also \cite{DR}) for unbounded domains. \\
The idea of the proof is to suppose by contradiction that there exists a direction $\nu _0$ and a connected component $C$ of local symmetry different from the whole cap $\Sigma _{\la _0(\nu _0)}^{\nu _0 }  $ (so that $ \partial C \setminus T_{\la _0(\nu _0)}^{\nu _0 } \neq \emptyset $ and there are points $x$ in that boundary where $\nabla u (x) =0 $ ), and simultaneously move hyperplanes orthogonal to different directions close to $\nu _0$.
 In this way we get  a single connected component $C_1$ as in the previous theorem that is a local symmetry component for \emph{all} the directions in a neighborhood of a direction $\nu _1 $ close to $\nu _0$.
Using this component $C$ we can then construct a set $\Gamma \subset \partial C _1$
where $u$ is constant and $\nabla u = 0$, and a ball whose boundary meets $\Gamma $ and  where $u>m$. Using  Hopf's Lemma  we get then a contradiction, since the points on $\Gamma $  are critical points of $u$. \par
We sketch below the details.
Besides Lemma \ref{muoveredirezionielambda} we shall make use of the following simple topological lemma, whose proof can be found in \cite{DP}(Corollary 4.1).
\begin{lemma}\label{topological lemma}
Let $A$ and $B$ be open
connected sets in
a topological space and assume that $A \cap B \neq \emptyset $ and $ A
\not
\equiv B$.
Then, $(\partial A \cap B) \; \cup \; (\partial B \cap A) \; \neq \;
\emptyset $.
\end{lemma}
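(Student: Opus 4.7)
The plan is to argue by contradiction: suppose that both $\partial A \cap B = \emptyset$ and $\partial B \cap A = \emptyset$, and deduce from these two equalities that $A = B$, which would contradict the hypothesis $A \not\equiv B$.

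The key tool is a clopen decomposition that exploits the openness of the sets involved. Since $A$ is open, the ambient topological space $X$ splits as the disjoint union $X = A \sqcup \partial A \sqcup (X \setminus \overline{A})$, and both $A$ and $X \setminus \overline{A}$ are open. Under the assumption $\partial A \cap B = \emptyset$, the set $B$ is covered by the two disjoint open sets $A$ and $X \setminus \overline{A}$; since $B$ is connected, it must lie entirely in one of them, and since $A \cap B \neq \emptyset$ by hypothesis, that one must be $A$. Hence $B \subseteq A$.

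A symmetric argument — swapping the roles of $A$ and $B$, using the hypothesis $\partial B \cap A = \emptyset$, the connectedness of $A$, and $A \cap B \neq \emptyset$ — yields the reverse inclusion $A \subseteq B$. Combining the two inclusions gives $A = B$, contradicting $A \not\equiv B$. Therefore the initial supposition fails and $(\partial A \cap B) \cup (\partial B \cap A) \neq \emptyset$.

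There is no real obstacle here, since the statement is a standard topological exercise; the only point to handle carefully is to invoke the openness hypothesis in the right place, so that $X \setminus \partial A = A \sqcup (X \setminus \overline{A})$ genuinely is a decomposition into disjoint \emph{open} pieces — this is precisely what allows the connectedness of $B$ (and, symmetrically, of $A$) to force the desired inclusion.
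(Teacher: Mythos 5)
Your proof is correct and is the standard connectedness argument one would expect for this fact; the paper itself does not reproduce a proof but simply cites Corollary 4.1 of \cite{DP}, where the same clopen-decomposition idea (using $X = A \sqcup \partial A \sqcup (X\setminus\overline A)$ and the connectedness of $B$, then symmetrizing) is used. No gap.
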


\bigskip

\begin{proof}[\underline{Proof of Theorem \ref{radialtheorem}}]
Let us now fix a direction $ \nu_0$ such that
$u \not \equiv u^{\nu_{0}}_{\la_0(\nu_{0})}$ in
$\Sigma^{\nu_{0}}_{\la_0(\nu_{0})}$ and  let $ \bar \varepsilon $ be as in Lemma \ref{muoveredirezionielambda} ii) .  \par
Let
${\mathcal{F}}_{\nu_{0}}$ be the collection of the
connected components $C^{\nu_{0}}$ of $(\Sigma^{\nu_{0}}_{\lambda_0 \, (\nu_{0})} \,
\setminus \, Z^{\nu_{0}}_{\lambda_0 \, (\nu_{0})})$ where $u  \equiv u^{\nu_{0}}_{\la_0(\nu_{0})}$.
Since $u \not \equiv u^{\nu_{0}}_{\la_0(\nu_{0})}$ in
$\Sigma^{\nu_{0}}_{\la_0(\nu_{0})}$, there is at least such a component $C$ with $\partial C \setminus T ^{\nu_{0}}_{\la_0(\nu_{0})} \neq \emptyset $, and since the components are open the collection is denumerable.
 Let $\{ C^{\nu_0} _i \} _{i \in I \subset \N}  $ be an enumeration of the sets in
${\mathcal{F}}_{\nu_0}$. \par
We
need to
introduce a symmetrized version $\tilde{\mathcal{F}}_\nu$ of
$\mathcal{F}_\nu $. \par
If $\nabla u(x) = 0 \;$ for all $\; x$ on $ \partial C^\nu,$ then define
$\tilde{C}^\nu = C^\nu.$  If not, there are points $x \in (\partial
C^\nu \,
\cap \, T^\nu_{\lambda_0 \, (\nu)})$ such that $\nabla u(x) \neq 0.$  In
such cases,
define
$$\tilde{C}^\nu = C^\nu \, \cup \, R^\nu_{\lambda_0 \, (\nu)} \,
(C^\nu) \,
\cup \, \{x \in (\partial C^\nu \, \cap \, T^\nu_{\lambda_0 \,
(\nu)}) \, : \, \nabla u (x) \, \neq \, 0 \}.$$
It is easy to see that the sets $\tilde{C}^\nu$ are open and connected, and moreover (and this is the reason to introduce them) we have that
$\nabla u(x)=0$ for any $x \in \partial \tilde{C}^\nu $, while $\nabla u(x)\neq 0$ for any $x \in  \tilde{C}^\nu $. \par
\smallskip
\textbf{STEP 1} \emph{$\lambda_0 \, (\nu)$ is a continuous  function of $\nu$ at $\nu_0$.}
\par
Let us fix $\varepsilon >0 $,  $ \varepsilon < \bar \varepsilon $, where $ \bar \varepsilon $ is as in Lemma \ref{muoveredirezionielambda} ii) .  \par
By the definition of $\lambda_0 \, (\nu_0),$  there exist $\lambda \in
\, (\lambda_0 \,
(\nu_0) , \, \lambda_0 \, (\nu_0)+   \varepsilon) \; \, {\rm and} \,\;  x \in
\Sigma^{\nu_0}_\lambda$ such that $u(x) > u^{\nu_0}_\lambda (x).$  By
continuity of $u$ with respect to $\nu\,$,
there exists $ \delta_1 > 0 $ such that for every $\nu \in I_{\delta_1} (\nu_0) $
$ x \in \, \Sigma^\nu_\lambda$ and $u(x) \, > \, u^\nu_\lambda (x)$.
  Hence,  for every
$\nu \, \in \, I_{\delta_1} \, (\nu_0)$ we have
$ \, \lambda _0 (\nu) \, < \, \lambda_0 \, (\nu_0)  + \varepsilon  $. \par
Now we claim that there exists $ \delta_2 > 0$ such that $\lambda_0(\nu _0) \,- \, \varepsilon <
 \lambda_0 \, (\nu)  \,  $ for any $\nu \in I_{\delta_2} \,
(\nu_0).$
\par
 If this is not true, then there exists a sequence $\{\nu_n\}$ of
directions such that $\nu_n \rightarrow \nu_0 $ and $\lambda_0 \, (\nu_n) \,
\leq \, \lambda_0 \, (\nu_0) \, - \, \varepsilon  \, \; \forall \, \; n.$
By step 1 in Theorem \ref{Partial symmetry}   we have that $\lambda_0 (\nu) \geq - R_0 $ for
any direction $\nu$.  Thus, the sequence $\lambda_0 (\nu_n)$ is bounded and
hence, up to a subsequence, it converges to a number
$\overline{\lambda} \leq \lambda_0 \, (\nu_0) \, - \, \varepsilon $.
  Then, by \eqref{disug.strettaprimadilambda0}
we have
$
u < u^{\nu_0}_{\overline{\lambda}} \,\quad {\rm in} \quad \,
\Sigma^{\nu_0}_{\overline{\lambda}} \; \setminus  \,
Z^{\nu_0}_ {\overline{\lambda}}
$. \par
Now by Lemma \ref{muoveredirezionielambda},  i), \    we have that
$u \leq u^\nu_\lambda$ in
$ \Omega^\nu_\lambda$
for any $\lambda \, $ close to $ \overline \la $
 and any $\nu$ close to $\nu_0$.\par
In particular, this will hold for $\nu_n$ and
$\lambda_0 \, (\nu_n) \, + \, \gamma,$ for some $n$ large and $\gamma$ small,
contradicting the definition of $\lambda_0 \, (\nu_n)$.  \par
 Thus, it follows that
 \begin{equation} \label{deltadiepsilon} \forall \, \varepsilon > 0 \, \exists \, \delta  =  \delta (\varepsilon ) > 0 \, : \quad \la_0(\nu_0) - \varepsilon \, < \, \la_0(\nu) \, < \, \la_0(\nu_0) + \varepsilon \qquad  \forall \,
 \nu \in
I_ \delta (\nu_0)\,.
\end{equation} \par
\smallskip
\textbf{STEP 2}
 \emph{Here we prove that if $0 < \bar \delta  \leq \min \{   \delta \, (\bar \varepsilon ), \bar \varepsilon \}$ (where $\delta \, (\bar \varepsilon )$ is as in \eqref{deltadiepsilon}) then for each
$\nu \in
I_{\bar  \delta  } \, (\nu_0),$  there exists $ i \, \in \, I
 $ such that
$\tilde{C}_i^{\nu_0} \, \in \, \tilde{\mathcal{F}}_\nu$.}\\
 This means that for every direction $\nu $ close to $ \nu _0$, one of the connected components of local symmetry in the direction $\nu _0$ is also a component of local symmetry in the direction $\nu $.\\
The crucial remark that will help us is the following:  if $\nu_1$ and $\nu_2$ are two
directions and  $C^{\nu_1} \in {\mathcal{F}}_{\nu_1}$,
$C^{\nu_2} \in {\mathcal{F}}_{\nu_2}$, then either $\tilde{C^{\nu_1}}
\, \cap \tilde{C^{\nu_2}} = \emptyset$ or $\tilde{C^{\nu_1}} \equiv
\tilde{C^{\nu_2}}$. This is a consequence of Lemma \ref{topological lemma}. In fact, if  $\tilde{C^{\nu_1}}
\, \cap \tilde{C^{\nu_2}} \neq  \phi$ and $\tilde{C^{\nu_1}} \not \equiv
\tilde{C^{\nu_2}}$ then one  set must meet the boundary of the other, and this is impossible since
$\nabla u(x)=0$ for any $x \in \partial \tilde{C}^{\nu _k} $, while $\nabla u(x)\neq 0$ for any $x \in  \tilde{C}^{\nu _k} $, $k=1,2$.  \par
Let now $\nu \in
I_{ \bar \delta  } \, (\nu_0) $,  with $0 < \bar  \delta  \leq \min \{   \delta \, (\bar \varepsilon ), \bar \varepsilon \}$ so that
$|\nu - \nu _0 |$, $|\lambda _0 (\nu) - \lambda _0 (\nu _0 )| < \bar \varepsilon$, and
suppose by contradiction that  $
u \; < \; u^{\nu}_{\lambda _0(\nu )} \quad {\rm in} \quad
F_{\nu _0} \cap \Sigma^{\nu}_{\lambda _0(\nu )}  \,
$ where $F_{\nu _0} = \{ x \in \Sigma^{\nu_0}_{\la_0} \setminus Z^{\nu_0}_{\la_0} : u(x)= u_{\lambda _0}^{\nu _0}(x) \}
=  \cup _{i \in I} C_i^{\nu _0}$.
By Lemma \ref{muoveredirezionielambda} ii)  we would have $u \; < \; u^{\nu}_{\lambda } $ in $\Sigma ^{\nu}_{\lambda } $ for $\lambda > \lambda _0(\nu ) $, contradicting the definition of $\lambda _0(\nu )$.\\
So we have proved that there exists $ i \, \in \, I
 $ ($I$ is the index set of the enumeration of $\tilde{\mathcal{F}}_{\nu _0}$ )  and
$x \in \tilde{C}_i^{\nu_0}$ such that $u(x)= u ^{\nu}_{\lambda _0(\nu )} (x)$. Since $\nabla u(x)\neq 0 $ by the strong comparison principle
$u\equiv  u ^{\nu}_{\lambda _0(\nu )} $ in the component $\tilde {C}^{\nu } $ of $\Sigma^{\nu}_{\lambda _0(\nu )}  \setminus Z ^{\nu}_{\lambda _0(\nu )} $ to which $x$ belongs. Then $\tilde {C}^{\nu }= \tilde{C}_i^{\nu_0}$ by the previous remark (components $\tilde {C^{\nu_1}} \in \tilde {\mathcal{F}}_{\nu_1}$,
$\tilde {C^{\nu_2}} \in \tilde {\mathcal{F}}_{\nu_2}$ either coincide or are disjoint).\par

\smallskip
\textbf{STEP 3} \emph{ There exists a direction $\, \nu ' \,$ near
$\,\nu_0 \,$ and a set ${\tilde{C}}^{\nu_0}_{i} \, \in \,
{\tilde{\mathcal{F}}}_{\nu_{0}} \cap {\tilde{\mathcal{F}}}_{\nu '}$ such that
 the set ${\tilde{C}}^{\nu_0}_{i} \, \in \,
{\tilde{\mathcal{F}}}_\nu$, for \emph{every} direction $\nu$ in a neighbourhood $\overline {I_{ \delta '}( \nu ' )}= \{ \nu : |\nu - \nu ' | \leq \delta '  \}$ of $
\,  \nu '\,$ }.\\
 Let $\{ {\tilde{C_i}}^{\nu_0}\}_{i \in I} $ be an enumeration of the sets in
${\tilde{\mathcal{F}}}_{\nu_0}$, and let $\bar \delta $ be as in Step 2.\\
If  there exists $\delta _0 $, $0 < \delta _0 \leq \bar \delta$ such that ${\tilde{C}}_1^{\nu_0} \in
{\tilde{\mathcal{F}}}_\nu \,$ for all $\, \nu \in \overline {I_{ \delta _0 }
(\nu_0)}$,  then Step 3 is proved, with $ \nu '  = \nu _0$, $\delta ' = \delta _0$ and  we are done.\\
  If not, there exists $\, \nu_1 \in
\overline {I_{\bar \delta  } (\nu_0)}$ such that
${\tilde{C}}_1^{\nu_0} \,\not\in \, {\tilde{\mathcal{F}}}_{\nu_1}$, and
we show now that  $ \, \tilde {C}^{\nu_0}_1 \not\in
\tilde{\mathcal{F}}_\nu \,$ for all the directions  $ \, \nu $ sufficiently close to $\nu _1$. \\
If  $S_1 \subset C_1^{\nu_0} \cap \Sigma _{\lambda _0 (\nu _1)}^{\nu _1}$ is compact, we have that $ u^{\nu_1}_{\la_0(\nu_1)} -u \geq \gamma >0 $
in $S_1$ so that for the directions $\nu $ close to
$ \nu _1 $ we have \  $u < u^{\nu}_{\la_0(\nu)} $ in  $S_1 $, and by the strong comparison principle
  $ \, \tilde {C}^{\nu_0}_1 \not\in
\tilde{\mathcal{F}}_\nu \,$. \\
So there exists  $ \delta _1 \leq
\bar  \delta  - |\nu _1 - \nu _0 | $ such that $ \, \tilde {C}^{\nu_0}_1 \not\in
\tilde{\mathcal{F}}_\nu \,$   for any $ \, \nu \in \overline {I_{ \delta_1}(\nu_1)}$.\\
  Now we check if $\, {\tilde{C}}^{\nu_0}_2 \, \in \,
{\tilde{\mathcal{F}}}_\nu \,$ for all $\, \nu \in \overline {I_{ \delta_1}(\nu_1)}$.\\
 If not, we find a
direction $ \nu _2 \in \overline {I_{ \delta_1}(\nu_1)}  $ and a
neighborhood  $\, \overline  {I_{ \delta_2}(\nu_2)}\,$
such that $\, \delta_2 \leq   \delta _1 - |\nu _2 - \nu _1| \,$ and $ {\tilde{C}}_1^{\nu_0} \, , \,  \,{\tilde{ C}}^{\nu_0}_2 \,\not\in \,
{\tilde{\mathcal F}}_{\nu} \,$ for any $ \, \nu \, \in \, \overline  {I_{ \delta_2}(\nu_2)}$.  \\
Proceeding in this way,
\begin{itemize}
\item[(i)] either we stop at $k$'th stage where for $k \in \ N $ we have
${\tilde{C}}^{\nu_0}_{k+1} \,
\in \, {\tilde{\mathcal F}}_\nu \;
\forall \; \nu \in \overline  {I_{\delta_k} (\nu_k)}$
\item[(ii)] or the process does not stop at all and $I = \N.$
\end{itemize}
Now we claim that (ii) cannot arise. In case (ii), we obtain a sequence
of nested compact sets
$\{\overline{I_{\delta_i}(\nu_i)}\}_{i \in I}$ with the finite intersection
property.  Then, by
Cantor's intersection theorem
$\cap_{i \in I}
\overline{I_{\delta_i}(\nu_i)} \neq \phi.$  For the direction $\nu$ in this
intersection, ${\tilde{C}}_i^{\nu_0}\,\not\in \,
{\tilde{\mathcal{F}}}_\nu$  for any $ i$. But  this
contradicts Step 2, thus Step 3 now follows.\par
\smallskip
\textbf{STEP 4} \emph{ $u  \equiv u^{\nu_{0}}_{\la_0(\nu_{0})}$ in $\Sigma ^{\nu_{0}}_{\la_0(\nu_{0})}$   for any direction $\nu $ and $u$ is radial. }  \par
 Let ${\tilde{C}}^{\nu_1}_{i} \,$ be
as in Step  3 and let  $x_0 \in \partial C \setminus T ^{\nu_1}_{\la_0
(\nu_1)}$, so that $x_0 \cdot \nu _1 < \lambda _0 (\nu _1)$,  $\nabla u(x_0)=0$, and let $x'_0= R ^{\nu_1}_{\la_0
(\nu_1)}(x_0)$. The set  $\Gamma = \{ R ^{\nu}_{\la_0
(\nu )}(x_0')= R ^{\nu}_{\la_0
(\nu)}(R ^{\nu_1}_{\la_0
(\nu_1)}(x_0)) \, : \, \nu \in  \overline {I_{\bar \delta }(\nu _1)}  \}$ is compact, since it is the image of a compact set for  the continuous function
 $\nu \in \overline {I_{\bar \delta }(\nu _1)} \mapsto  R ^{\nu}_{\la_0
(\nu )}(x_0') $. \par
  By definition,  $\Gamma \subset Z_u$ and $u=m=u(x_0)=u(x_0')$ is constant on $\Gamma $. \par
Moreover the projection of $\Gamma $ on the
hyperplane $T^{\nu _1}_{\lambda_0 \, (\nu _1)}$ contains an $N-1$-dimensional ball $B_{\R^{N-1}}(z_0, \alpha)$ open in
$T^{\nu _1}_{\lambda_0  \, (\nu _1)}$.\\
This is impossible by Proposition 5.1 in \cite{DPR} which state precisely that such a set cannot exist.
For the reader's convenience we give here a simple proof in our case, assuming for simplicity that the direction is $\nu _1 = e_1$ and $x_0  = (y_0, z_0)$, with
$y_0 \in \R $, $z_0 \in \R ^{N-1}$, $y_0 < \lambda _0 = \lambda _0 (e_1) $, and
$x_0'= (y_0', z_0)= (2 \lambda _0 - y_0, z_0)$ where $\lambda _0 = \lambda _0 (e_1)$. \par
Observe  that since the function $\lambda _0 (\nu )$ is continuous, taking $\bar \delta $ smaller if necessary, the set $\Gamma $ is a (compact) subset of the half space
$\Sigma _{\lambda _0}=\Sigma _{\lambda _0 (e_1)}^{e_1} $, it has a positive distance $d$ from the hyperplane  $T_{\lambda _0}=T_{\lambda _0 (e_1)}^{e_1} $, and if  $m$ is the constant value of $u$ on $\Gamma $,
  by the monotonicity of $u$ in the cap $\Sigma _{\lambda _0 }$, we have that $u \geq m $
  in the part $R_{\Gamma }$ of $\Sigma _{\lambda _0  } $  to the right  of $\Gamma $, i.e. in the segments parallel to the direction $ x _1$ going from each point $ R ^{\nu}_{\la_0
(\nu)}( x_0')$,  $ \nu \in  \overline {I_{\bar \delta}(e _1)}  $, to the hyperplane $T_{\lambda _0  } $. \par
If $d= \min \{ \text{ dist }\left (\Gamma , T_{\lambda _0} \right ) \,,\, \alpha \} >0$ (where the projection of $\Gamma $ on the
hyperplane $T^{\nu _1}_{\lambda_0 \, (\nu _1)}$ contains an $N-1$-dimensional ball $B_{\R^{N-1}}(z_0, \alpha)$ open in
$T^{\nu _1}_{\lambda_0  \, (\nu _1)}$), we can move the balls $B_{\lambda } = B\left ( (\lambda ,z_0), \frac d2 \right )$ decreasing $\lambda $ from $\lambda _0 $ until $\overline {B_{\lambda } }$ first touches $\Gamma$ in a point $\bar x \in \Gamma \cap \partial B_{\lambda }$. This will happen for
$\bar \lambda \leq \lambda _0 - \frac d2 $
and a ball $B=B_{\bar \lambda}= B \left ( (\bar \lambda ,z_0), \frac d2 \right ) \subset  R_{\Gamma } \subset \Sigma _{\lambda _0}$. \par
 Since $- \Delta _p (u-m) = u^{p^*-1} >0 $ in $B$ and $u - m \geq 0 $ in $B$, we have that $u>m $ in $B$ and by Hopf's Lemma we have $\nabla u (\bar x) \neq 0 $, and this contradicts  the fact that $\Gamma $ is a subset of $Z_u$.\par
So it is impossible that ii) holds in Theorem \ref{Partial symmetry} and this means that for any direction $\nu $ we have $u \equiv u_{\lambda _0 (\nu)}^{\nu}$ in $\Sigma _{\lambda _0 (\nu)}^{\nu}$. \par

  Furthermore we can conclude using the same method that
$Z_u \cap \Sigma^\nu_{\lambda_0  \, (\nu)} \, = \,  \emptyset $ for any
direction $\nu $.\par
 Thus,   $u$ is
strictly increasing in  every direction $\nu$ in  $ \Sigma^\nu_{\lambda_0  \, (\nu)}$, with $u \equiv u_{\lambda _0 (\nu)}^{\nu}$ in $\Sigma _{\lambda _0 (\nu)}^{\nu}$.  \par
Considering now $N$
linearly independent directions in $\R^N$, the final symmetry
result follows:
 $u$ is radially symmetric about some point $x_0 \in
\R^N$ \  ($x_0= \cap _{k=1}^N T_{\lambda _0 (e_k)}^{e_k}$) \ , which is the only critical point of $u$,  and it is  strictly radially decreasing.
 \par
   \end{proof}

\section{Proof of Theorem \ref{triscorojfmdmgmgmgbmbxbdkjgdfkghk} and Theorem \ref{corojfmdmgmgmgbmbxbdkjgdfkghk}}\label{sectioncorta}
The proof of Theorem \ref{triscorojfmdmgmgmgbmbxbdkjgdfkghk} and the proof of Theorem \ref{corojfmdmgmgmgbmbxbdkjgdfkghk} are direct consequences of Theorem \ref{radialtheorem} and the results in \cite{willem}.\\
 Let us only provide a few details for the reader's convenience.

\begin{proof}[\underline{Proof of Theorem \ref{triscorojfmdmgmgmgbmbxbdkjgdfkghk}}]
Once that Theorem \ref{radialtheorem} is proved,
 the proof follows  directly by Theorem 1.2 in \cite{willem} since the solutions to the critical problem in the whole space are now classified
 by \eqref{classificationveron}.
\end{proof}

\begin{proof}[\underline{Proof of Theorem \ref{corojfmdmgmgmgbmbxbdkjgdfkghk}}]
Let us first observe that we are in position to apply Theorem \ref{triscorojfmdmgmgmgbmbxbdkjgdfkghk}. \\
\emph{Let us consider the case $(i)$.}\\
It is standard to see that, if $v_0$ is not trivial, then
\[
J(v_0)>\frac{S^{\frac Np}}{N}\,.
\]
This follows   using $v_0$ as a test function in the equation $-\Delta_p v_0 =v_0^{p^*-1}$ in $\Omega$, and then, exploiting the Sobolev's inequality, recalling that the best Sobolev's constant is not achieved in bounded domains.
Therefore, $(i)$ follows by \eqref{trisssprojectionenetimateenery} recalling that
\[
J_\infty (\mathcal{U}_{\lambda_i,x_i})=\frac{S^{\frac Np}}{N}\,.
\]
\noindent \emph{Let us now consider the case $(ii)$.}\\
We deduce that   $k=1$ in this case exploiting again the  fact that $J_\infty (\mathcal{U}_{\lambda_i,x_i})=\frac{S^{\frac Np}}{N}$ and \eqref{trisssprojectionenetimateenery}.  Then, we set $\lambda_1=\lambda$ and $x_1=x_0$ and $(ii)$ is proved.\\
Finally, exploiting again the fact that $J_\infty (\mathcal{U}_{\lambda_i,x_i})=\frac{S^{\frac Np}}{N}$, we deduce
 that $c=\frac{S^{\frac Np}}{N}$. Namely, if  $S^{\frac{N}{p}}/N < c<2S^{\frac{N}{p}}/N$, necessarily  $(i)$ occurs.
\end{proof}

\end{document}